\theoremstyle{plain}
\newtheorem{thm}{Theorem}
\newtheorem{lem}{Lemma}
\newtheorem{rem}{Remark}
\newtheorem{prop}{Proposition}
\numberwithin{equation}{section}
\newcommand{\dom}{\mathop{\rm dom}}
\newcommand{\supp}{\mathop{\rm supp}}
\renewcommand{\kappa}{\varkappa}
\newcommand{\Real}{\mathbb R}
\newcommand{\Cmpl}{\mathbb C}
\newcommand{\eps}{\varepsilon}
\newcommand{\cI}{\mathcal{I}}
\newcommand{\cE}{\mathcal{E}}
\newcommand{\lme}{\lambda^\eps}
\newcommand{\cH}{\mathcal{H}}
\newcommand\nep{\textstyle\frac r\eps}
\newcommand{\pte}{\partial_n}
\newcommand{\npt}{\partial_\nu}
\begin{document}

\title[2D Schr\"{o}dinger operators with singular potentials]
{2D Schr\"{o}dinger operators with singular potentials concentrated near  curves}

\author{Yuriy Golovaty}%
\address{Department of Mechanics and Mathematics,
  Ivan Franko National University of Lviv\\
  1 Universytetska str., 79000 Lviv, Ukraine}
\email{yuriy.golovaty@lnu.edu.ua}

\subjclass[2000]{Primary 35P05; Secondary  81Q10, 81Q15}

\begin{abstract}
We investigate the Schr\"{o}dinger operators $H_\eps=-\Delta +W+V_\eps$ in $\Real^2$
with the short-range potentials $V_\eps$ which are localized around a smooth closed curve $\gamma$. The operators $H_\eps$ can be viewed as an approximation of the heuristic Hamiltonian $H=-\Delta+W+a\partial_\nu\delta_\gamma+b\delta_\gamma$, where  $\delta_\gamma$ is  Dirac's $\delta$-function supported on $\gamma$ and $\partial_\nu\delta_\gamma$ is its normal derivative on $\gamma$. Assuming that the  operator $-\Delta +W$ has only discrete spectrum, we analyze the asymptotic behaviour of eigenvalues and eigenfunctions of $H_\eps$. The transmission conditions on $\gamma$ for the eigenfunctions
$ u^+=\alpha u^-$, $\alpha\, \partial_\nu u^+-\partial_\nu u^-=\beta u^-$,
which arise in the limit as $\eps\to 0$, reveal a nontrivial connection between spectral properties of $H_\eps$ and the geometry of $\gamma$.
\end{abstract}

\keywords{Schr\"{o}dinger operator, singular interaction, $\delta$ potential,  $\delta'$-interaction, interaction on curve, asymptotics of eigenvalues}
\maketitle


\section{Introduction}

Solvable type operators with interactions supported by manifolds of a lower dimension have attracted considerable attention both in the physical and mathematical literature in recent years. Such  operators are of interest in applications of mathematics in different fields of science and engineering because they reveal  unquestioned effectiveness whenever the exact solvabi\-li\-ty together with a nontrivial description of an actual physical phenomenon is required. The Schr\"{o}dinger ope\-rators with pseudo-potentials  that are distributions supported by curves, surfaces, metric graphs  are used successfully for modelling  quantum systems  with charged inclusions,  leaky quantum graphs, quantum waveguides etc. There exists a large body of results on this subject, but we conﬁne ourselves to the case of hypersurfaces (i.e., manifolds of codimension one) as interaction supports.  This case is a natural generalization to higher dimensions of the one-dimensional Hamiltonians with point interactions such as $\delta$- or $\delta'$-interactions.

The Schr\"{o}dinger operators  formally written as
\begin{equation}\label{FormalDelta}
  -\Delta+\alpha \delta_S
\end{equation}
with  $\delta$ potentials supported by  compact or non-compact orientable  hypersurfaces~$S$   have attracted special attention in the last decade.
The pseudo-potential $\alpha \delta_S$ is a distribution in $\mathcal{D}'(\Real^n)$ acting as
\begin{equation*}
  \langle\alpha \delta_S, \phi\rangle=\int_S \alpha\phi\,d\sigma
\end{equation*}
for any $\phi\in C^\infty_0(\Real^n)$, where $\alpha$ is a locally integrable function on $S$ and  $\sigma$ is the natural measure on $S$ induced by the Riemannian
metric. The physical motivation comes  from nuclear, molecular and solid-state physics \cite{GreenMoszkowski1965, Lloyd1965, FaesslerPlastino1967, Blinder1978}, where the so-called SDI model (surface delta interaction) has been used since 1965.

If the hypersurface $S$ is smooth enough, one can give meaning to the heuristic expression \eqref{FormalDelta} in different ways. We can suppose that $H_{\alpha, S}$ is the Laplacian acting on the functions $f\in W_2^2(\Real^n\setminus S)$ satisfying the transmission conditions
\begin{equation}\label{CndsDeltaOnS}
  f^+=f^-, \quad \partial_\nu f^+-\partial_\nu f^-=\alpha f \quad\text{on } S.
\end{equation}
Here $f^-$ and $f^+$ denote the one-side traces of $f$ on $S$ and $\nu$ is the normal vector field on $S$.
The formal expression \eqref{FormalDelta} can also be defined rigorously via the symmetric sesquilinear form
\begin{equation*}
  \mathfrak{a}(f,g)=(\nabla f,\nabla g)_{L_2(\Real^n;\Cmpl^n)}+\int_S \alpha f|_S\: \bar{g}|_S\,d\sigma, \qquad \dom \mathfrak{a}=W_2^1(\Real^n),
\end{equation*}
where $f|_S$ denote the trace of function $f\in W_2^1(\Real^n)$ on $S$. The form $\mathfrak{a}$ is a densely defined, closed, and semibounded  in $L_2(\Real^n)$, and hence there
exists a  self-adjoint operator $A_{\alpha, S}$ in $L_2(\Real^n)$ such that $(A_{\alpha, S}f,g)_{L_2(\Real^n)}=\mathfrak{a}(f,g)$ for all $f\in \dom A_{\alpha, S}$ and $g\in \dom \mathfrak{a}$.

An essential  advantage of this model is its ``stability'' with respect to regula\-ri\-zations by the Schr\"{o}dinger operators with short-range potentials.
If a family of potentials $U_\eps$ with compact supports converges to $\alpha \delta_S$ in the space of distributions, then the operators $-\Delta+U_\eps$
converge to $H_{\alpha, S}$ in the norm resolvent sense, as $\eps\to 0$ \cite{AntoineGesztesyShabani1987, Shimada1992, BehrndtExnerHolzmannLotoreichik2017}.
The self-adjointness, spectral properties,  scattering behaviour of the Schr\"{o}dinger operators with $\delta_S$-interactions were investigated in numerous articles in the recent past; we mention here
\cite{AntoineGesztesyShabani1987, Shimada1992, Shimada1994-1, Shimada1994-2, ExnerFraas2007, ExnerFraas2008, AlbeverioKostenkoMalamudNeidhardt2014}
for interactions on finite or infinite families of  concentric spheres, \cite{BehrndtExnerHolzmannLotoreichik2017, ExnerFraas2009,  BehrndtLangerLotoreichik2013, BehrndtExnerLotoreichik2014-1, ExnerJex2014, LotoreichikRohleder2015, DittrichExnerKuhn2016, MantilePosilicanoSini2016, Lotoreichik2019}  on  closed hypersurfaces and hypersurfaces with boundary, and \cite{BehrndtExnerLotoreichik2014-2, OurmieresBonafosPankrashkin2018} for interactions concentrated near conical surfaces.

Similarly, one can also generalize to higher dimensions the four-parameter fa\-mi\-ly of singular point interactions on the line, see \cite{ExnerRohleder2016}. However, for some  reason a very popular interaction in the  literature, of course, in addition to the  $\delta_S$-interaction, is the so-called $\delta'$-interaction supported on hypersurfaces \cite{LotoreichikRohleder2015, ExnerKhrabustovskyi2015, Jex2015, JexLotoreichik2016}. This interaction is characterized by the transmission
 conditions
\begin{equation*}
\partial_\nu f^+=\partial_\nu f^-, \quad f^+- f^-=\alpha \partial_\nu f\quad \text{on } S.
\end{equation*}

Despite all advantages of the solvable models, they  give rise to many ma\-the\-matical difficulties. One of them deals with the  multiplication of distributions;
many Schr\"{o}dinger operators with singular potentials are often only formal expressions without a precise or unambiguous mathematical meaning.
The aim of the present paper is to find proper solvable models, i.e., proper transmission conditions on an interaction support, for the pseudo-Hamiltonians
 \begin{equation}\label{FormalDeltaPrime}
 H=-\Delta+W+a\partial_\nu\delta_\gamma+b\delta_\gamma
\end{equation}
in $\Real^2$, where $W$ is a regular potential, $a$ and $b$ are some functions on the closed curve $\gamma$, and $\delta_\gamma$ is  Dirac's $\delta$-function supported on $\gamma$.
 The pseudo-potential $a\partial_\nu\delta_\gamma+b \delta_\gamma$ is a distribution in $\mathcal{D}'(\Real^2)$ acting as
\begin{equation*}
  \langle a\partial_\nu\delta_\gamma+b \delta_\gamma, \phi \rangle= \int_\gamma  (-\npt(a\phi)+ b \phi)\,d\gamma
\end{equation*}
for all test functions $\phi\in C^\infty_0(\Real^2)$. It should be noted that this problem is not related to the above-mentioned $\delta'$-interactions.

In contrast to \eqref{FormalDelta},  heuristic expression \eqref{FormalDeltaPrime} is more singular, and the problem of giving its strictly mathematical meaning is more subtle.
First of all,  $H$ is ``unstable'' under regularizations by Hamiltonians $H_\eps=-\Delta +W+V_\eps$ with short-range potentials $V_\eps$. From a physical viewpoint, it means that the quantum systems with  $(a\partial_\nu\delta_\gamma+b\delta_\gamma)$-like localized potentials of different  shapes possess slightly different properties. Our purpose is to find solvable mo\-dels describing with admissible fidelity the real quantum processes  governed by  $H_\eps$ for given  $V_\eps$  suitably scaled in the normal direction to $\gamma$. The mathematical motivation for studying such operators is also that they exhibit nontrivial  relations between some spectral
properties and the geo\-metry of $\gamma$ which arise (upon passing to the limit) in the family of transmission conditions
\begin{equation*}
  f^+=\alpha f^-, \quad \alpha\, \partial_\nu f^+-\partial_\nu f^-=\beta f^- \quad\text{on } \gamma.
\end{equation*}
Assuming that the unperturbed operator $-\Delta+W$ has only  point spectrum, we analyze the asymptotic behaviour of eigenvalues and eigenfunctions of $H_\eps$ as $\eps\to 0$.

The one-dimensional case of the problem was studied in
\cite{GolovatyManko2009, GolovatyHrynivJPA2010, Golovaty2012, GolovatyHrynivProcEdinburgh2013, GolovatyIEOT2013}.
For the Schr\"{o}\-din\-ger operators with $(\alpha\delta'+\beta\delta)$-like potentials,  the norm resolvent convergence  was established
and a family of exactly solvable models was  obtained (see also \cite{ChristianZolotarIermak03, Zolotaryuk08, Zolotaryuk09} for regularizations of $\alpha\delta'+\beta\delta$ by piecewise constant potentials).

\begin{figure}[hb]
\centering
\includegraphics[scale=.45]{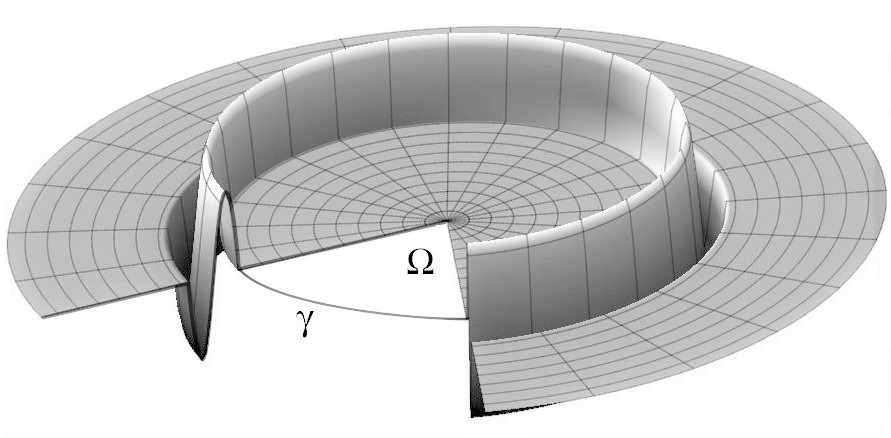}
\caption{The $\partial_\nu\delta_\gamma$-like potential (part of the plot is cut out for better visualization).}
\label{FigPdDeltaLike}
\end{figure}

The distribution $-\partial_\nu\delta_\gamma$ can be interpreted as the Laplacian of the indicator (or the characteristic function) $\mathds{1}_{x\in\Omega}$ of the bounded domain $\Omega$ enclosed by $\gamma$. Namely,
\begin{equation*}
  \partial_\nu\delta_\gamma=-\Delta \mathds{1}_{x\in\Omega}
\end{equation*}
in the sense of distributions. Suppose that $\chi^\eps$ is a sequence of smooth  functions which are identically one on  $\Omega$,  vanish arbitrarily close to $\Omega$, and
$\chi^\eps\to \mathds{1}_{x\in\Omega}$ in $L^1(\Real^2)$ as $\eps\to 0$. Then $V_\eps=-\Delta \chi^\eps$ is an example of $\partial_\nu\delta_\gamma$-like potentials, see Fig.~\ref{FigPdDeltaLike}. Obviously, $V_\eps$ converges to $\partial_\nu\delta_\gamma$ in $\mathcal{D}'(\Real^2)$ as $\eps\to 0$.
Such potentials are interesting not only in the context of Schr\"{o}dinger operators, they also arise in the Navier-Stokes equations,   free boundary problems, and in the potential theory for parabolic and elliptic PDE in bounded domains.
The Laplacian of the indicator function with its grid adaptations is the base of the front-tracking method. This numerical method  allows simulating  unsteady multi-fluid flows  in which a sharp interface  separates incompressible fluids of different density and viscosity \cite{UnverdiTryggvason1992}, a time dependent two-dimensional dendritic solidification of pure substances \cite{JuricTryggvason1996},   flow-flexible body interactions with large deformation \cite{UddinSung2012}.
The Laplacian of the indicator and its regularizations have been used  to establish some relationships between the Dirichlet and Neumann boundary value problems for the heat and Laplace equations  and the theory  of the Feynman path integrals \cite{Lange2012}.


\section{Statement of Problem and Main Results}

We study the family of   Schr\"{o}dinger operator
\begin{equation*}
H_\eps=-\Delta +W+V_\eps
\end{equation*}
in $L^2(\Real^2)$, where the potential $W$ belongs to $L^\infty_{loc}(\Real^2)$  and increases as $|x|\to +\infty$.
Let $\gamma$ be a  closed  smooth curve in $\Real^2$ without self-intersection points. Assume also that $W$ is smooth in a neighbourhood of $\gamma$.
We define the short-range potentials $V_\eps$ as follows.  Let $\omega_\eps$ be the $\eps$-neighborhood of $\gamma$, i.e., the union of all open balls of radius $\eps$ around a point on $\gamma$. For $\eps$ small enough, $\omega_\eps$ is a domain with smooth boundary. To specify   explicit dependence of $V_\eps$ on  $\eps$ we introduce  local coordinates  in $\omega_\eps$ (see Fig.~\ref{FigLC}).
Let $\alpha\colon [0,|\gamma|)\to \Real^2$ be the unit-speed smooth parametrization of $\gamma$ with the natural parameter $s$, and $|\gamma|$ is the length of $\gamma$. Then the vector $\nu=(-\dot{\alpha}_2, \dot{\alpha}_1)$ is a unit normal on $\gamma$. Set $ x=\alpha(s)+r\nu(s)$ for $(s,r)\in [0,|\gamma|)\times (-\eps, \eps)$,
where $r$ is the signed distance from $x$ to $\gamma$.
Suppose that  $V_\eps$ has the form
\begin{equation}\label{Veps}
V_\eps\big(\alpha(s)+r\nu(s)\big)=\eps^{-2}\,V\left(\eps^{-1}r\right)
+\eps^{-1}\,U\left(s,\eps^{-1}r\right),
\end{equation}
where $V$ and $U$ are smooth functions such  that the supports
of $V$ and $U(s,\,\cdot\,)$ lie in the interval $[-1,1]$ for all $s$. Hence, $\supp V_\eps\subset \omega_\eps$.
In general, the potentials $V_\eps$ diverge in the space of distributions $\mathcal{D}(\Real^2)$;
 $V_\eps$ converge only if $V$ is a zero mean function, as we will show below. In this case, $V_\eps\to a\partial_\nu\delta_\gamma+b\delta_\gamma$ in $\mathcal{D}'(\Real^2)$,
 where $a$ and $b$ are some  functions on $\gamma$.
The unperturbed operator $H_0=-\Delta +W$ is self-adjoint in $L^2(\Real^2)$ and its spectrum is discrete.  Obviously, the operators $H_\eps$ are also  self-adjoint  with  discrete spectrum  and  $\dom H_\eps=\dom H_0$.
The main task is to describe the  limiting behaviour of the spectrum  of $H_\eps$ by constructing the asymptotics of eigenvalues $\lambda^\eps$ and eigenfunctions $u_\eps$ of the problem
\begin{equation}\label{SpectralEqn}
-\Delta u_\eps +(W+V_\eps) u_\eps= \lambda^\eps u_\eps\quad \hbox{in \ } \Real^2.
\end{equation}

\begin{figure}[b]
\centering
\includegraphics[scale=.7]{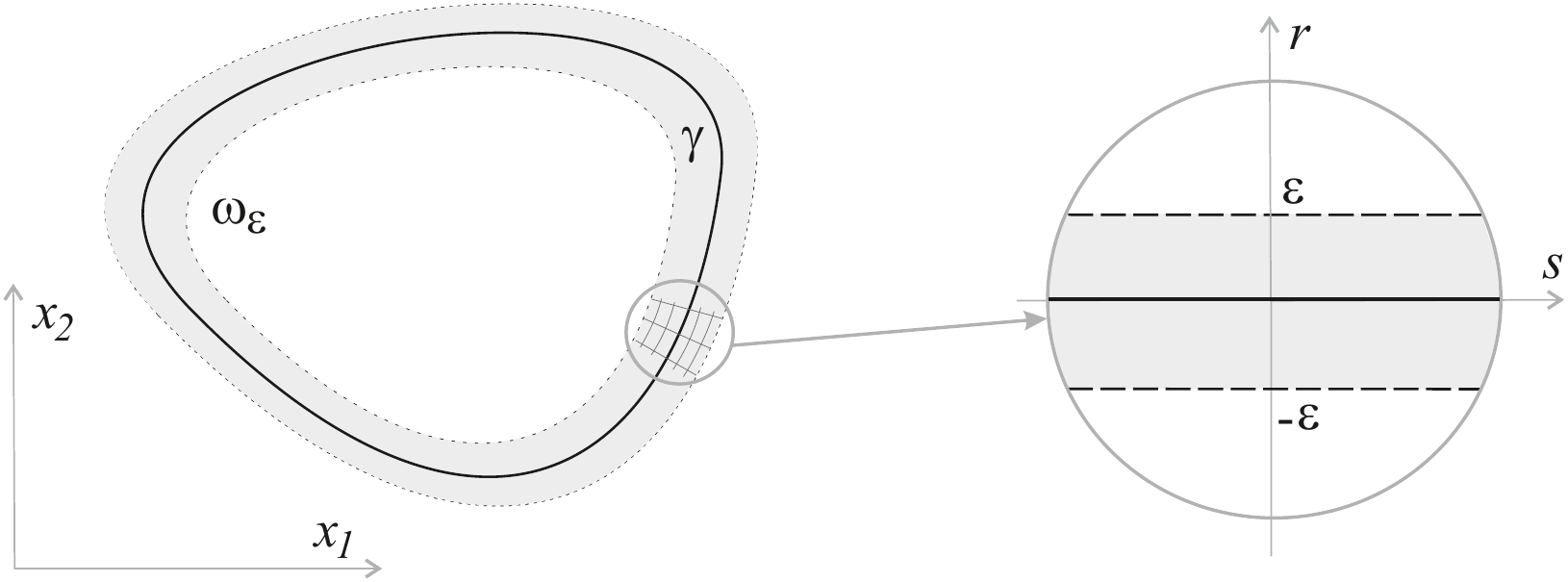}
\caption{The local coordinates  in $\omega_\eps$.}
\label{FigLC}
\end{figure}

We  say that the one-dimensional Schr\"odinger operator~$-\frac{d^2}{d r^2}+V$ in $L^2(\Real)$ possesses a \emph{zero-energy resonance}  if there exists a nontrivial solution~$h$ of the equation $-h'' +Vh= 0$ that is bounded on the whole line.  We call $h$ the \emph{half-bound state}. The half-bound state is  unique up to a scalar factor and has nonzero limits
\begin{equation*}
  h(-\infty)=\lim\limits_{r\to-\infty}h(r), \qquad
  h(+\infty)=\lim\limits_{r\to+\infty}h(r).
\end{equation*}
The closed curve $\gamma$ divides the plane into two domains $\Omega^-$ and $\Omega^+$, $\Real^2=\Omega^-\cup\gamma\cup\Omega^+$. Suppose  $\Omega^+$ is unbounded.
Let us introduce the space $\mathcal{W}^+\subset L^2(\Omega^+)$ as follows. We say that $f$ belongs to $\mathcal{W}^+$ if   there exists a functions $f_0$  belonging to  $\dom H_0$ such that $f=f_0$ in  $\Omega^+$. Also, we set $\mathcal{W}=\{f\in L^2(\Real^2)\colon f|_{\Omega^-}\in W_2^2(\Omega^-), \; f|_{\Omega^+}\in \mathcal{W}^+\}$.
Recall that $v^\pm$ denote the one-side traces of $v$ on $\gamma$.

Let $\cE\subset(0, 1)$ be an infinite set, for which zero is an accumulation point.
Our main result reads as follows.

\begin{thm}\label{MainThrmRes}
Assume that the operator~$-\frac{d^2}{d r^2}+V$ in $L^2(\Real)$ possesses a zero-energy resonance with the half-bound state $h$.

(i) Suppose that $\{\lme\}_{\eps\in\cE}$ is a sequence of eigenvalues of $H_\eps$ and $\{u_\eps\}_{\eps\in\cE}$ is the corresponding sequence of  eigenfunctions such that $\|u_\eps\|_{L^2(\Real^2)}=1$. If
\begin{equation}\label{LmneToLmn}
  \lme \to \lambda, \qquad u_\eps \to u\quad \text{in } L^2(\Real^2) \   \text{weakly}
\end{equation}
as $\cE\ni\eps\to 0$,  and $u$ is a non-zero function, then $\lambda$ is an eigenvalue with the eigenfunction $u$ of the operator $\cH=-\Delta +W$ in $L^2(\Real^2)$  acting on the domain
\begin{align*}
  \dom \cH=\big\{v\in \mathcal{W}\colon\;v^+=\theta v^-,\;\;\theta\partial_\nu v^+-\partial_\nu v^-
=\left(\textstyle\frac{1}{2 }(\theta^2-1)\kappa+\mu\right) v^-\,\text{on } \gamma \big\}.
\end{align*}
Here $\theta=h(+\infty)/h(-\infty)$, $\kappa=\kappa(s)$ is the  curvature of $\gamma$, and
\begin{equation*}
  \mu(s)=\frac{1}{h^2(-\infty)} \int_{\Real} U(s,r)h^2(r)\, dr.
\end{equation*}

(ii) If \eqref{LmneToLmn} holds and $\lambda$ is not a point of  $\sigma(\cH)$, then the sequence of eigenfunctions $u_\eps$ converges to zero as $\cE\ni\eps\to 0$ in the weak topology of  $L^2(\Real^2)$.

(iii) For each eigenvalue $\lambda$ of $\cH$ and  all $\eps$ small enough there exists an eigenvalue $\lme$ of  $H_\eps$ such that
$|\lme-\lambda|\leq c\eps$
with the constant $c$ depending only on $\lambda$.
\end{thm}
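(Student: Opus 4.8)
Both halves of Theorem~\ref{MainThrmRes} --- the identification of the limit in (i)--(ii) and the quasimode bound in (iii) --- rest on one formal observation. Near $\gamma$ an eigenfunction of $H_\eps$ must, to leading order, coincide with $c(s)\,h(r/\eps)$, where $h$ is the half-bound state, because in the stretched normal variable $\xi=r/\eps$ the leading ($\eps^{-2}$) part of the equation is exactly $-h''+Vh=0$; the $O(\eps)$ correction then solves a one-dimensional inhomogeneous equation $-\phi''+V\phi=g$, whose solution with prescribed linear growth $\phi'(\pm\infty)=\partial_\nu u^\pm$ exists only when the Wronskian identity $h(+\infty)\partial_\nu u^+-h(-\infty)\partial_\nu u^-=-\int_\Real h\,g$ holds. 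Writing out $g$ --- it contains the term $\frac{\kappa}{1-r\kappa}h'$ coming from the first-order part of the Laplacian in the local coordinates $(s,r)$ and the term $U(s,\xi)h$ --- and using $\int_\Real h h'=\tfrac12\bigl(h^2(+\infty)-h^2(-\infty)\bigr)$, one sees that this compatibility condition is exactly $\theta\partial_\nu u^+-\partial_\nu u^-=\bigl(\tfrac12(\theta^2-1)\kappa+\mu\bigr)u^-$, while the leading matching forces $u^+=\theta u^-$. The whole proof consists in turning this picture into rigorous statements in both directions.

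\emph{Passage to the limit (parts (i)--(ii)).} I would not work with $u_\eps$ directly, since its Dirichlet energy in the layer blows up like $\eps^{-1}$; instead substitute $u_\eps=h(r/\eps)\,w_\eps$ in a fixed tubular neighbourhood of $\gamma$, continuing $h(r/\eps)$ by the constants $h(\pm\infty)$ outside the layer $\{|r|<\eps\}$ so that $w_\eps$ is globally defined. By $-h''+Vh=0$ the $\eps^{-2}$-singular terms cancel identically, and the equation for $w_\eps$ becomes a divergence-form equation with the bounded weight $h^2(r/\eps)$, a localized zeroth-order coefficient of the form $\eps^{-1}\bigl(\tfrac{\kappa}{1-r\kappa}\tfrac{h'(r/\eps)}{h(r/\eps)}+U(s,r/\eps)\bigr)$ and a uniformly bounded remainder; crucially the two $\eps^{-1}$-contributions to $\partial_r(u_\eps/h)$ cancel, so $w_\eps$ is bounded in $H^1_{\mathrm{loc}}$. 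Combining this with the standard tightness coming from $W\to+\infty$ one extracts $w_\eps\rightharpoonup w$ in $H^1_{\mathrm{loc}}$ and $w_\eps\to w$ in $L^2_{\mathrm{loc}}$, with $w=u/h(\pm\infty)$ on $\Omega^\pm$. In the limit the weight becomes the piecewise constant $h^2(\pm\infty)$, while the singular coefficient converges in $\mathcal{D}'(\Real^2)$ to a measure on $\gamma$ (because $\int_\Real h'/h$ equals $\ln\theta$ and, in conservative form, the weight $h^2$ multiplies $\eps^{-1}U(s,r/\eps)$, producing $\int_\Real U h^2$), so that $w$ is continuous across $\gamma$ and satisfies a flux-jump condition; rewriting it for $u=h(\pm\infty)w$ reproduces exactly the domain $\dom\cH$, and the interior equations give $\cH u=\lambda u$. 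This is (i); and (ii) follows since, were $\lambda\notin\sigma(\cH)$, the argument of (i) would make the weak limit $u$ an eigenfunction of $\cH$, which is impossible, so $u\equiv 0$.

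\emph{Quasimodes (part (iii)).} Given an eigenpair $(\lambda,u)$ of $\cH$ --- whose eigenfunctions are smooth up to $\gamma$ from either side by elliptic regularity --- I would build an approximate eigenfunction $U_\eps\in\dom H_\eps$ by matched asymptotics: $U_\eps=u$ outside an intermediate neighbourhood $\{|r|<\eps^{\tau}\}$, $0<\tau<1$, and inside it the two-scale expansion $c_0(s)h(r/\eps)+\eps U_1(s,r/\eps)+\eps^2 U_2(s,r/\eps)$ with $c_0=u^-/h(-\infty)$ and $U_1,U_2$ the linearly-growing solutions of the corrector equations obtained at orders $\eps^{-1}$ and $\eps^0$, the two pieces joined by a cutoff. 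The matching and the solvability of these corrector equations are consistent precisely because $(\lambda,u)$ satisfies the transmission conditions and the interior equation $-\Delta u+Wu=\lambda u$. A direct estimate then gives $\|U_\eps\|\ge c>0$ and $\|(H_\eps-\lambda)U_\eps\|_{L^2(\Real^2)}\le C\eps$: the order-$\eps^{-2}$, $\eps^{-1}$, $\eps^0$ residuals vanish by construction, the truncated inner residual is $O(\eps)$ on a layer of width $O(\eps^{\tau})$, and the commutators produced by the cutoff are controlled because inner and outer parts agree to high order on the overlap. Since $H_\eps$ is self-adjoint with purely discrete spectrum, $\mathrm{dist}\bigl(\lambda,\sigma(H_\eps)\bigr)\le C\eps$, which is (iii).

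\emph{Main obstacle.} The heart of the matter is the reduction in (i)--(ii): $u_\eps$ is genuinely not bounded in $H^1(\Real^2)$, and only the cancellation built into the resonance condition $-h''+Vh=0$ restores the compactness needed to pass to the limit. Making this cancellation quantitative, tracking correctly the cross-terms that produce the geometric coefficient $\tfrac12(\theta^2-1)\kappa$ --- which arise simultaneously from the first-order part of the Laplacian and from the discontinuity of the limiting weight $h^2(\pm\infty)$ --- and handling the possibility that the half-bound state $h$ vanishes at finitely many points, where the substitution $u_\eps=h(r/\eps)w_\eps$ must be localized away from its zeros, are the technically delicate points.
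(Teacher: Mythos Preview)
Your quasimode construction for (iii) is essentially the paper's: there too one takes $u$ outside the $\eps$-layer and a two-scale expansion $v_0+\eps v_1+\eps^2 v_2$ inside, with a small corrector to restore membership in $\dom H_\eps$; the only cosmetic difference is that the paper matches exactly on $\gamma_{\pm\eps}$ and absorbs the $O(\eps)$ mismatch into a cut-off supported in a fixed annulus, rather than using an intermediate $\eps^\tau$-zone.

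For (i)--(ii) your route is genuinely different from the paper's, and the difference matters. The paper never forms $w_\eps=u_\eps/h(r/\eps)$ and never seeks an $H^1$ bound on anything built from $u_\eps$. Instead it places the resonance profile into the \emph{test function}: for $\psi\in\Psi_\theta$ (functions with the prescribed jump $\psi^+=\theta\psi^-$) it sets $R_\eps\psi=\psi$ outside $\omega_\eps$ and $R_\eps\psi=\psi(s,-\eps)h(r/\eps)+\eps\,\psi_1^\eps$ inside, then integrates by parts in $n$ against $u_\eps$ using only that $h$ solves $-h''+Vh=0$, $h'(\pm1)=0$. This produces the boundary terms $\theta^{-1}u_\eps(s,\eps)$, $u_\eps(s,-\eps)$ and the coefficient $\Upsilon=\tfrac12(\theta^2-1)\kappa+\mu$ directly, and passes to the limit using nothing stronger than the assumed weak $L^2$ convergence of $u_\eps$ together with the weak trace convergence $u_\eps(\cdot,\pm\eps)\rightharpoonup u^\pm$ in $L^2(S)$, which the paper extracts from the equation away from the layer. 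No division by $h$, no $H^1$ control, no compactness beyond $L^2$.

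Your substitution approach can be made to work when $h$ is bounded away from zero, but that hypothesis is not part of the theorem and can fail: take $V=-\pi^2/4$ on $[-1,1]$, so that $h(\xi)=-\sin(\pi\xi/2)$ satisfies $h'(\pm1)=0$, $h(-1)=1$, $h(1)=-1$ (hence $\theta=-1$), and $h(0)=0$. In such cases the weight $h_\eps^2$ in your divergence-form equation degenerates and the claimed $H^1_{\mathrm{loc}}$ bound for $w_\eps$ is lost precisely on the set where the transmission information lives; ``localizing away from the zeros'' then leaves a gap across which you still have to propagate the jump relations, and you give no mechanism for that. A second, smaller point: the sentence ``the two $\eps^{-1}$-contributions to $\partial_r(u_\eps/h)$ cancel'' is not what actually happens --- the cancellation is at order $\eps^{-2}$ in the \emph{equation} (via $-h''+Vh=0$), not in $\partial_r w_\eps$ itself, and the residual $\eps^{-1}$ zeroth-order term still has to be absorbed by a trace-type inequality before any $H^1$ bound follows; this is doable when $\inf h>0$ but should be said. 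The paper's test-function method sidesteps both issues at once.
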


\begin{rem}\rm
The operator $-\frac{d^2}{d r^2}$ with the trivial potential $V=0$ possesses a zero-energy resonance with   the half-bound state $h=1$. Then $V_\eps(x) =\eps^{-1}\,U\left(s,\eps^{-1}r\right)$ and
$V_\eps\to \mu_0 \delta_\gamma$ in the space of distributions, where
\begin{equation}\label{Mu0}
  \mu_0(s)=\int_{\Real}U(s,r)\, dr.
\end{equation}
Since $\theta=1$, the interface conditions on $\gamma$
\begin{equation}\label{ConnectedCond}
  v^+=\theta v^-,\quad \theta\partial_\nu v^+-\partial_\nu v^-
=\left(\textstyle\frac{1}{2 }(\theta^2-1)\kappa+\mu\right) v^-
\end{equation}
become $v^+=v^-$, $\partial_\nu v^+-\partial_\nu v^-=\mu_0 v$, which correspond to the $\delta$-interaction supported on the curve \cite{BehrndtExnerHolzmannLotoreichik2017}.
\end{rem}

\begin{rem}\rm
 Note that the potentials $V$ for which the operator $-\frac{d^2}{d r^2}+V$ has a zero-energy resonance are not something exotic.
For any $V$ of compact support, there exists a discrete infinite set of real coupling constants $\alpha$ such that $-\frac{d^2}{d r^2}+\alpha V$  has a zero-energy resonance \cite{GolovatyManko2009, Golovaty2012}.
\end{rem}

Let us introduce two operators
\begin{align*}
  &\mathcal{D}^-= -\Delta+W\quad\text{in }L^2(\Omega^-), \qquad \dom \mathcal{D}^-=\{v\in W_2^2(\Omega^-)\colon\; v=0 \;\,\mbox{on } \gamma\},\\
  &\mathcal{D}^+= -\Delta+W\quad\text{in }L^2(\Omega^+), \qquad \dom \mathcal{D}^+=\{v\in \mathcal{W}^+\colon\; v=0 \;\,\mbox{on } \gamma\}.
\end{align*}

\begin{thm}\label{MainThrmNoRes}
Suppose that the operator $-\frac{d^2}{d r^2}+V$ in $L^2(\Real)$ has no zero-energy resonance and $\{\lme\}_{\eps\in\cE}$ is a sequence of eigenvalues of $H_\eps$ and $\{u_\eps\}_{\eps\in\cE}$ is the corresponding sequence of  eigenfunctions such that $\|u_\eps\|_{L^2(\Real^2)}=1$.

 (i) If $\lme \to \lambda$ and $u_\eps \to u$ in $L^2(\Real^2)$ weakly, as $\cE\ni\eps\to 0$, and the limit function $u$ is different from zero, then
$\lambda$ is an eigenvalue of the direct sum $\mathcal{D}^-\oplus\mathcal{D}^+$ and $u$ is the corresponding eigenfunction.

(ii) In the case when $\lme \to \lambda$, as $\cE\ni\eps\to 0$, and $\lambda\not\in \sigma(\mathcal{D}^-\oplus\mathcal{D}^+)$, the eigenfunctions $u_\eps$ converge to zero in $L^2(\Real^2)$ weakly.

(iii) If $\lambda\in \sigma(\mathcal{D}^-\oplus\mathcal{D}^+)$, then for all $\eps$ small enough we can find  an eigenvalue $\lme$ of $H_\eps$ such that
$ |\lme-\lambda|\leq c\eps$,
where the constant $c$ does not depend on $\eps$.
\end{thm}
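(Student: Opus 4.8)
\emph{Strategy and the limit equation.} The plan is to prove Theorem~\ref{MainThrmNoRes} along the same lines as Theorem~\ref{MainThrmRes}, the half-bound state being now replaced by the structural consequence of the no-resonance hypothesis: the linear map sending a solution $h$ of $-h''+Vh=0$ to the pair of its slopes at $\pm\infty$ is an isomorphism of $\Real^2$ onto $\Real^2$; equivalently, the only solution of $-h''+Vh=0$ bounded on all of $\Real$ is $h\equiv0$. This is precisely what makes $\gamma$ impenetrable in the limit, so that the limiting operator decouples into $\mathcal{D}^-\oplus\mathcal{D}^+$. First I would fix a tubular neighbourhood of $\gamma$ together with the Fermi coordinates $x=\alpha(s)+r\nu(s)$, so that $\supp V_\eps\subset\{|r|<\eps\}$. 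On compact subsets of $\Real^2\setminus\gamma$ the potential $V_\eps$ vanishes once $\eps$ is small, hence interior elliptic estimates for $-\Delta+W-\lme$ give uniform $H^2_{\mathrm{loc}}(\Real^2\setminus\gamma)$ bounds for the $u_\eps$, and the growth of $W$ at infinity turns these into weighted estimates. Passing to the limit in the weak formulation tested against functions in $C^\infty_0(\Omega^\pm)$ shows that $u$ satisfies $-\Delta u+Wu=\lambda u$ in $\Omega^-\cup\Omega^+$, with $u|_{\Omega^-}\in W_2^2(\Omega^-)$ and $u|_{\Omega^+}\in\mathcal{W}^+$.

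\emph{The vanishing of the traces.} The decisive step, which I expect to be the main obstacle, is to prove that $u^+=u^-=0$ on $\gamma$; this also excludes concentration of $u_\eps$ inside $\omega_\eps$. I would rescale in the layer, setting $w_\eps(s,\rho)=u_\eps\big(\alpha(s)+\eps\rho\,\nu(s)\big)$ for $\rho\in(-1,1)$. Writing $-\Delta$ in Fermi coordinates and using the form \eqref{Veps} of $V_\eps$, the eigenvalue equation multiplied by $\eps^2$ reduces to $-\partial_\rho^2 w_\eps+V(\rho)\,w_\eps=O(\eps)$ in $L^2$, the tangential part and the eigenvalue term contributing only $O(\eps^2)$, the curvature and the subleading potential $U$ entering at order $\eps$. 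One then needs uniform bounds for $u_\eps$ and its normal derivative up to $\partial\omega_\eps$ — the technically hardest point, since the strongly singular and sign-indefinite potential $\eps^{-2}V(\eps^{-1}r)$ prevents one from deducing such bounds from the (non-coercive) quadratic form of $H_\eps$. Granting them, the normal derivative enters $w_\eps$ with a factor $\eps$ and hence tends to zero, so any weak limit point $w_0$ of the $w_\eps$ satisfies, for a.e.\ $s$, $-w_0''+Vw_0=0$ on $(-1,1)$ with $\partial_\rho w_0(s,\pm1)=0$. Since $V$ is supported in $[-1,1]$, extending $w_0(s,\cdot)$ by its boundary values gives a bounded solution of $-h''+Vh=0$ on $\Real$, which must therefore vanish; as $w_\eps(s,\pm1)\to u^\pm(s)$, we conclude $u^\pm=0$. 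Hence $u\in\dom(\mathcal{D}^-\oplus\mathcal{D}^+)$ and $(\mathcal{D}^-\oplus\mathcal{D}^+)u=\lambda u$, which proves (i) when $u\ne 0$; (ii) then follows, since a nonzero weak limit point would be an eigenfunction of $\mathcal{D}^-\oplus\mathcal{D}^+$ for an eigenvalue $\lambda\notin\sigma(\mathcal{D}^-\oplus\mathcal{D}^+)$, which is impossible, so $u_\eps\to0$ weakly.

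\emph{The approximation result (iii).} Given an eigenpair $(\lambda,u_0)$ of $\mathcal{D}^-\oplus\mathcal{D}^+$, I would build a quasimode by a matched two-scale ansatz: $u_0+\eps u_1$ outside $\omega_\eps$, and a boundary-layer term $\eps V_1(s,r/\eps)+\eps^2 V_2(s,r/\eps)$ inside $\omega_\eps$, glued by cut-offs supported in $\{\eps<|r|<2\eps\}$. Since $u_0$ vanishes on $\gamma$, the leading profile is of order $\eps$, and $V_1$ is the solution of $-V_1''+VV_1=0$ whose slopes at $\pm\infty$ equal $\partial_\nu u_0^{\,\pm}$; this solution exists and is unique precisely by the no-resonance isomorphism, and the constant parts of its affine asymptotics prescribe the Dirichlet data for the outer corrector $u_1$. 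As $\lambda\in\sigma(\mathcal{D}^-\oplus\mathcal{D}^+)$, the problem for $u_1$ carries a Fredholm obstruction, which one removes by shifting $\lambda$ by $O(\eps)$ (or, more cheaply, by tolerating an $O(\eps)$ residual along the eigenfunction). Carrying the expansion to the order needed so that the gluing errors are $O(\eps)$, a direct computation gives $\|(H_\eps-\lambda)u_\eps^{\mathrm{app}}\|_{L^2(\Real^2)}\le c\eps\,\|u_\eps^{\mathrm{app}}\|_{L^2(\Real^2)}$; since $H_\eps$ is self-adjoint with discrete spectrum, there is an eigenvalue $\lme$ of $H_\eps$ with $|\lme-\lambda|\le c\eps$, as claimed.
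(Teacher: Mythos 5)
Your part (iii) is essentially the paper's argument: the layer profile solving $-v''+Vv=0$ in $(-1,1)$ with Neumann data $\partial_\nu u^\mp$ at $\mp1$ exists and is unique precisely because there is no zero-energy resonance, and the quasimode lemma for a self-adjoint operator with discrete spectrum then gives an eigenvalue $\lme$ with $|\lme-\lambda|\le c\eps$; the outer corrector $u_1$ and the $O(\eps)$ shift of $\lambda$ you mention are not needed at this accuracy (the paper simply glues the eigenfunction $u$ to the inner expansion $\eps v_1+\eps^2 v_2$ by a cutoff). The problem is in (i)--(ii), at exactly the step you single out and then waive (``granting them''): the uniform bounds on $u_\eps$ and $\partial_r u_\eps$ up to $\partial\omega_\eps$ that your rescaled-layer argument requires are not available and are never proved in the paper either. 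The normalization $\|u_\eps\|_{L^2(\Real^2)}=1$ only yields $\|u_\eps(\cdot,\eps\,\cdot)\|_{L^2(Q)}\le c\,\eps^{-1/2}$ for the rescaled function; interior elliptic estimates stop at a fixed distance from $\gamma$; and the quadratic form of $H_\eps$ cannot supply the bound because, as the paper emphasizes, $H_\eps$ is in general not uniformly semibounded (negative eigenvalues may escape to $-\infty$). Without those bounds you cannot extract a weak limit $w_0$ of $w_\eps$, cannot assert that the remainder after multiplying the equation by $\eps^2$ is $O(\eps)$, and cannot pass to the limit in the Neumann conditions at $\rho=\pm1$; so the crucial conclusion $u^\pm=0$ is not established, and (ii) falls with it.

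The paper closes this step by a duality argument that needs only weak information (Lemma~\ref{UpmZero}). One takes the Cauchy solution $h_0$ of $-h_0''+Vh_0=0$, $h_0(-1)=1$, $h_0'(-1)=0$, builds the admissible test function equal to $\psi(s,-\eps)h_0(r/\eps)$ in $\omega_\eps$, glued by a cutoff outside, and inserts it into the weak identity \eqref{IdentityUeps}. Integration by parts in the layer produces the boundary term $-\eps^{-1}h_0'(1)\int_S u_\eps(s,\eps)\psi(s,-\eps)\,ds$ plus terms controlled by the crude $\eps^{-1/2}$ bound above, while Lemma~\ref{LemmaDconv}\,(ii) (uniform boundedness of $\chi_\eps\Delta u_\eps$, obtained from the uniform boundedness principle) keeps the whole expression bounded. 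Absence of a resonance means precisely $h_0'(1)\neq0$, which forces $\int_S u_\eps(s,\eps)\psi(s,-\eps)\,ds=O(\eps)$, i.e. $u_\eps(\cdot,\pm\eps)\to0$ weakly in $L^2(S)$; combined with the identification $u_\eps(\cdot,\pm\eps)\rightharpoonup u^\pm$ of Lemma~\ref{LemmaDconv}\,(iii) (itself proved by testing outside the layer, not by layer estimates), this gives $u^\pm=0$ using nothing beyond the normalization and weak convergence. To make your proof complete you must either establish the uniform layer bounds you assume --- a nontrivial task --- or replace the rescaling argument by a weak-formulation argument of this kind; as written, the central step of (i)--(ii) is a genuine gap, while your reduction of the trace condition to the non-resonance hypothesis and your treatment of (iii) are sound.
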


The thin structure that is the support of  $V_\eps$ can produce a infinite series of  eigenvalues that go to the negative infinity as $\eps\to 0$.  Although for each $\eps>0$ the number of negative eigenvalues is finite, for some potentials $V_\eps$ it can increase infinitely as $\eps\to 0$. In particular, this means that the family of operators $H_\eps$ is not generally uniformly bounded from below with respect to  $\eps$.
In this case, any real number can be an accumulation point of the eigenvalues $\lme$ of $H_\eps$.
Theorems~\ref{MainThrmRes} and  \ref{MainThrmNoRes} point out the principal difference between the eigenvalues of the limit operators and all other real points. This difference is that only the points of $\sigma(\cH)$ in the resonant case or $\sigma(\mathcal{D}^-\oplus\mathcal{D}^+)$ in the non-resonant case can be approximated by the eigenvalues $\lme$ of $H_\eps$ so that the corresponding eigenfunctions $u_\eps$  converge to   nontrivial limits in $L^2(\Real^2)$.
The asymptotics of the negative low-lying eigenvalues remains an open problem for the time being.


\section{Preliminaries}
Returning  to the local coordinates $(s,r)$,
we see that the couple of vectors
$ \alpha=(\dot{\alpha}_1, \dot{\alpha}_2)$, $\nu=(-\dot{\alpha}_2, \dot{\alpha}_1)$
gives the Frenet frame for $\gamma$.
The Jacobian of transformation $x_1=\dot{\alpha}_1(s)-r\dot{\alpha}_2(s)$, $x_2=\dot{\alpha}_2(s)+r\dot{\alpha}_1(s)$ has the form
\begin{multline*}
J(s,r)=
\begin{vmatrix}
  \dot{\alpha}_1(s)-r\ddot{\alpha}_2(s)& -\dot{\alpha}_2(s)\\
          \dot{\alpha}_2(s)+r\ddot{\alpha}_1(s)\phantom{0} & \dot{\alpha}_1(s)
\end{vmatrix}
\\
=\dot{\alpha}_1^2(s)+\dot{\alpha}_2^2(s)
-r\big(\dot{\alpha}_1(s)\ddot{\alpha}_2(s)-
  \dot{\alpha}_2(s)\ddot{\alpha}_1(s)\big)=1-r \kappa(s),
\end{multline*}
where $\kappa=\det(\dot{\alpha},\ddot{\alpha})$ is the signed curvature of $\gamma$.
We see that $J$ is positive for sufficiently small $r$, because  the curvature $\kappa$  is  bounded on $\gamma$.
Namely, the  coordinates are defined correctly on  $\omega_\eps$ for all $\eps<\eps_*$, where
\begin{equation}\label{EpsStar}
	\eps_*=\min_{\gamma}|\kappa|^{-1}.
\end{equation}
Note also that $\kappa$ is defined uniquely up to the reparametrization $s\mapsto-s$.
Interface conditions \eqref{ConnectedCond} contain the parameters $\theta$, $\kappa$ and $\mu$ which depend on the particular parametrization chosen for curve $\gamma$. The parameters change along with the change of the Frenet frame.

\begin{prop}
  The operator $\mathcal{H}$ in Theorem~\ref{MainThrmRes} does not depend upon the choice of the Frenet frame for  $\gamma$.
\end{prop}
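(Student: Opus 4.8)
The plan is to reduce the statement to a single elementary observation: for a smooth simple closed curve $\gamma$, a Frenet frame is the same datum as a unit-speed parametrization $\alpha$ (with $\nu=(-\dot\alpha_2,\dot\alpha_1)$ determined by $\alpha$), and any two unit-speed parametrizations of $\gamma$ differ only by a shift of the origin $s\mapsto s+s_0$ and, possibly, by the orientation reversal $s\mapsto-s$. A shift of the origin merely relabels the points of $\gamma$ and changes nothing, so it suffices to prove that $\dom\mathcal{H}$ is unchanged under $\tilde s=-s$. First I would observe that the spaces $\mathcal{W}^+$ and $\mathcal{W}$ are intrinsic: they are built only from $\dom H_0$ and from the two components $\Omega^\pm$ of $\Real^2\setminus\gamma$ (the bounded and the unbounded one), none of which refers to the parametrization. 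Hence only the interface conditions \eqref{ConnectedCond} need to be examined.

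Next I would record how each ingredient of \eqref{ConnectedCond} transforms under $\tilde s=-s$. Geometrically, $\dot{\tilde\alpha}=-\dot\alpha$, so $\tilde\nu=-\nu$ and $\tilde\kappa=\det(\dot{\tilde\alpha},\ddot{\tilde\alpha})=-\kappa$; the signed distance to $\gamma$ changes sign, $\tilde r=-r$, hence the normal derivative flips, $\partial_{\tilde\nu}=-\partial_\nu$, and the two one-sided traces are interchanged, $\tilde v^+=v^-$ and $\tilde v^-=v^+$. For the transmission data I would use that the \emph{potential} $V_\eps$ is a fixed function on $\Real^2$; only its local representation \eqref{Veps} changes, to $\eps^{-2}\tilde V(\eps^{-1}\tilde r)+\eps^{-1}\tilde U(\tilde s,\eps^{-1}\tilde r)$ with $\tilde V(r)=V(-r)$ and $\tilde U(\tilde s,r)=U(-\tilde s,-r)$. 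Consequently the half-bound state of $-\frac{d^2}{dr^2}+\tilde V$ is $\tilde h(r)=h(-r)$, so $\tilde\theta=\tilde h(+\infty)/\tilde h(-\infty)=\theta^{-1}$, and the substitution $r\mapsto-r$ in the integral defining $\mu$ gives $\tilde\mu=\theta^{-2}\mu$ (both $\tilde\kappa$, $\tilde\mu$ being compared with $\kappa$, $\mu$ at the same point of $\gamma$).

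Finally I would substitute these relations into \eqref{ConnectedCond} written in the tilded frame and check, by a short direct computation, that the resulting pair of conditions is equivalent to the original one. The first condition $\tilde v^+=\tilde\theta\,\tilde v^-$ becomes $v^-=\theta^{-1}v^+$, i.e.\ $v^+=\theta v^-$. In the second condition $\tilde\theta\,\partial_{\tilde\nu}\tilde v^+-\partial_{\tilde\nu}\tilde v^-=\bigl(\tfrac12(\tilde\theta^2-1)\tilde\kappa+\tilde\mu\bigr)\tilde v^-$ I would multiply by $\theta$ and use $v^+=\theta v^-$; the curvature contribution then becomes $-\tfrac12(\theta^{-2}-1)\theta^2\kappa=\tfrac12(\theta^2-1)\kappa$ and one recovers exactly $\theta\,\partial_\nu v^+-\partial_\nu v^-=\bigl(\tfrac12(\theta^2-1)\kappa+\mu\bigr)v^-$. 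Thus the two frames cut out the same subspace of $\mathcal{W}$, and $\mathcal{H}=-\Delta+W$ on this common domain is frame-independent. The only delicate point — and, I expect, the real content of the argument — is this last computation: the coefficient $\tfrac12(\theta^2-1)\kappa$ is precisely the one for which the curvature term transforms consistently with $\theta\mapsto\theta^{-1}$, $\mu\mapsto\theta^{-2}\mu$ and the interchange of the two sides of $\gamma$; for any other coefficient $\dom\mathcal{H}$ would depend on the chosen orientation.
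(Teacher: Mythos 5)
Your proposal is correct and follows essentially the same route as the paper: record how $\theta$, $\kappa$, $\mu$, the traces and the normal derivatives transform under orientation reversal, then check by the same short computation (using $-\theta(\theta^{-2}-1)=\theta^{-1}(\theta^{2}-1)$, in your version $\theta^{2}(\theta^{-2}-1)=-(\theta^{2}-1)$ after inserting $v^{+}=\theta v^{-}$) that both conditions in \eqref{ConnectedCond} are preserved. The only difference is that you additionally derive the transformation rules $\theta\mapsto\theta^{-1}$, $\mu\mapsto\theta^{-2}\mu$ from the fact that $V_\eps$ is a fixed function whose local representation changes, whereas the paper simply states them; this is a harmless (indeed welcome) amplification, not a different argument.
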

\begin{proof}
Every smooth  curve  admits two possible orientations of the arc-length parameter and consequently two possible  Frenet frames. Let us change the Frenet frame $\{\alpha, \nu\}$ to the frame $\{-\alpha, -\nu\}$ and prove that  conditions \eqref{ConnectedCond} will remain the same. The change leads to the following transformations:
\begin{equation*}
h(\pm\infty)\mapsto h(\mp\infty),\;\;
\theta\mapsto \theta^{-1}, \;\; \kappa\mapsto -\kappa,\;\; \mu\mapsto \theta^{-2}\mu, \;\; u_\pm\mapsto u_\mp, \;\; \partial_\nu u_\pm \mapsto -\partial_\nu u_\mp.
\end{equation*}
The first condition $u^+-\theta u^-=0$ in \eqref{ConnectedCond} transforms into $u^--\theta^{-1} u^+=0$ and therefore remains unchanged. For the second condition, we obtain
\begin{equation*}
 	-\theta^{-1}\partial_\nu u^-+\partial_\nu u^+
	-\left(-\textstyle\frac{1}{2}(\theta^{-2}-1)\kappa
	+\theta^{-2}\mu\right) u^+=0.
\end{equation*}
Multiplying the equality by $\theta$  yields
\begin{equation*}
	\theta\partial_\nu u^+-\partial_\nu u^-
	-\left(\textstyle\frac{1}{2 }(\theta^{2}-1)\kappa+\mu\right) 	\theta^{-1} u^+=0,
\end{equation*}
since $-\theta(\theta^{-2}-1)=\theta^{-1}(\theta^{2}-1)$. It remains to insert $u^-$ in place of $\theta^{-1} u^+$, in view of the first interface condition.
\end{proof}

The metric tensor $g=(g_{ij})$ in the orthogonal coordinates $(s,r)$  has the form
  \begin{equation*}
    g=
    \begin{pmatrix}
      J^2\phantom{0} & 0 \\
          0\phantom{0} & 1
    \end{pmatrix}.
  \end{equation*}
In fact, we have
$g_{11}=|x_s|^2=|\dot{\alpha}+r \dot{\nu}|^2
=|(1-r\kappa) \dot{\alpha}|^2=J^2$,
by the Frenet-Serret formula $\dot{\nu}=-\kappa \dot{\alpha}$, and $g_{22}=|x_r|^2=|\nu|^2=1$.
Then the gradient and the  Laplace-Beltrami operator in the local coordinates become
\begin{equation}\label{GradientLaplasianInSN}
\nabla \phi=J^{-1}\partial_s\phi\, \alpha+\partial_r\phi\, \nu,\qquad
\Delta \phi=J^{-1}\left(\partial_s(J^{-1}\partial_s \phi)+ \partial_r(J\partial_r \phi)\right).
\end{equation}

All the results presented in Theorems~\ref{MainThrmRes} and~\ref{MainThrmNoRes}  concern arbitrary potentials $V_\eps$ of the form \eqref{Veps} that generally diverge in the distributional sense. However,  the spectra of $H_\eps$ converge to the spectra of the limit operators  without reference to the convergence of  potentials. The following statement shows that the convergence conditions for $V_\eps$ and for the spectra of $H_\eps$ are quite different.

\begin{prop}
The family of potentials $V_\eps$ converges in the space of distributions if and only if
$\int_\Real V\,dr=0$. In this case,
\begin{equation*}
  V_\eps\to \mu_1\,\partial_\nu\delta_\gamma+\left(\mu_1\kappa+\mu_0\right) \delta_\gamma \quad \text{in }\mathcal{D}'(\Real^2),
\end{equation*}
where $\mu_1=-\int_\Real r V(r)\,dr$ and $\mu_0$ is given by \eqref{Mu0}.
\end{prop}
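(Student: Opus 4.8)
The plan is to test $V_\eps$ against an arbitrary $\phi\in C_0^\infty(\Real^2)$ and evaluate $\langle V_\eps,\phi\rangle$ using the change of variables $x=\alpha(s)+r\nu(s)$ on $\omega_\eps$, which has Jacobian $J(s,r)=1-r\kappa(s)$ as computed above. Since $\supp V_\eps\subset\omega_\eps$, we get
\begin{equation*}
\langle V_\eps,\phi\rangle=\int_0^{|\gamma|}\!\!\int_{-\eps}^{\eps}\Big(\eps^{-2}V(\eps^{-1}r)+\eps^{-1}U(s,\eps^{-1}r)\Big)\phi\big(\alpha(s)+r\nu(s)\big)\,(1-r\kappa(s))\,dr\,ds.
\end{equation*}
Rescaling $r=\eps t$ with $t\in[-1,1]$ turns this into
\begin{equation*}
\int_0^{|\gamma|}\!\!\int_{-1}^{1}\Big(\eps^{-1}V(t)+U(s,t)\Big)\phi\big(\alpha(s)+\eps t\,\nu(s)\big)\,(1-\eps t\,\kappa(s))\,dt\,ds.
\end{equation*}

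Next I would Taylor-expand $\phi(\alpha(s)+\eps t\,\nu(s))=\phi(\alpha(s))+\eps t\,\partial_\nu\phi(\alpha(s))+O(\eps^2)$ and multiply out the $(1-\eps t\kappa)$ factor, collecting powers of $\eps$. The coefficient of $\eps^{-1}$ is $\big(\int_{-1}^1 V(t)\,dt\big)\int_0^{|\gamma|}\phi(\alpha(s))\,ds$; this is the obstruction term, and it vanishes in the limit (equivalently, the family stays bounded) precisely when $\int_\Real V\,dr=0$, which gives the "only if" direction after noting that for generic $\phi$ the factor $\int_\gamma\phi\,d\gamma$ is nonzero. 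Assuming $\int_\Real V\,dt=0$, the $\eps^0$-term survives: from the $\eps^{-1}V(t)$ piece we pick up $\eps^{-1}V(t)\cdot(\eps t\,\partial_\nu\phi(\alpha(s)))$ and $\eps^{-1}V(t)\cdot\phi(\alpha(s))\cdot(-\eps t\kappa(s))$, contributing
\begin{equation*}
\int_0^{|\gamma|}\Big(\int_{-1}^1 tV(t)\,dt\Big)\big(\partial_\nu\phi(\alpha(s))-\kappa(s)\phi(\alpha(s))\big)\,ds,
\end{equation*}
while from the $U(s,t)$ piece we pick up the leading term $\int_0^{|\gamma|}\big(\int_{-1}^1 U(s,t)\,dt\big)\phi(\alpha(s))\,ds=\int_\gamma\mu_0(s)\phi\,d\gamma$ (using $\mu_0$ from \eqref{Mu0}, since $\supp U(s,\cdot)\subset[-1,1]$).

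Finally I would identify the limit distribution. Writing $\mu_1=-\int_\Real tV(t)\,dt=-\int_\Real rV(r)\,dr$, the surviving terms read
\begin{equation*}
\lim_{\eps\to0}\langle V_\eps,\phi\rangle=\int_\gamma\big(-\mu_1\,\partial_\nu\phi+\mu_1\kappa\,\phi+\mu_0\,\phi\big)\,d\gamma=\big\langle \mu_1\partial_\nu\delta_\gamma+(\mu_1\kappa+\mu_0)\delta_\gamma,\,\phi\big\rangle,
\end{equation*}
by the very definition of the action of $a\partial_\nu\delta_\gamma+b\delta_\gamma$ recalled in the introduction (with $a=\mu_1$, $b=\mu_1\kappa+\mu_0$). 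One must also check that the $O(\eps)$ remainder is genuinely $O(\eps)$ uniformly, which follows from smoothness of $\phi$, $\alpha$, $\kappa$, $U$ and compactness of $\gamma$; this is routine. The only subtle point — and the main thing to get right — is the bookkeeping of which cross-terms between the $\eps^{-1}$-scaled $V$ and the two $\eps^1$ factors (the Taylor remainder of $\phi$ and the Jacobian correction $-\eps t\kappa$) land at order $\eps^0$; it is exactly these two that produce the $\partial_\nu\delta_\gamma$ and the $\kappa\delta_\gamma$ contributions. For the converse ("if"), once $\int_\Real V\,dt=0$ the computation above already exhibits the limit, so convergence holds; this completes both implications.
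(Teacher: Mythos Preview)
Your proof is correct and follows essentially the same approach as the paper: pass to local coordinates on $\omega_\eps$, use the Jacobian $1-r\kappa$, rescale $r=\eps t$, Taylor-expand $\phi$ in the normal direction, and read off the $\eps^{-1}$ obstruction and the $\eps^0$ limit. The only cosmetic difference is that the paper treats the $U$-part separately (noting at the outset that $\eps^{-1}U(s,\eps^{-1}r)\to\mu_0\delta_\gamma$) and then analyzes the $V$-part, whereas you handle both together; the computations and the identification of the limit distribution are otherwise identical.
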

\begin{proof}
It is evident that the sequence $\eps^{-1}\,U\left(s,\eps^{-1}r\right)$
converges to  $\mu_0 \delta_\gamma$ in $\mathcal{D}'(\Real^2)$.
Write $g_\eps=\eps^{-2}\,V\left(\eps^{-1}r\right)$  and $n=\eps^{-1}r$. Then we have
\begin{multline*}
\int_{\Real^2}g_\eps\phi\,dx
=\int_{\omega_\eps}g_\eps\phi\,dx
=
\eps^{-2}\int_{-\eps}^\eps\int_{0}^{|\gamma|} V(\eps^{-1}r)\phi(s,r)(1-r\kappa(s))\,ds\,dr
\\
 =
\eps^{-1}\int_{-1}^1\int_0^{|\gamma|} V(n)\phi(s,\eps n)(1-\eps n\kappa(s))\,ds\,dn
=\eps^{-1}\int_{-1}^1 V(n)\,dn \int_0^{|\gamma|}\phi(s,0)\,ds
\\
+
\int_{-1}^1 n V(n)\,dn \int_0^{|\gamma|}\big(\partial_n\phi(s,0)-\kappa(s)\phi(s,0)\big)\,ds+O(\eps)
\end{multline*}
as $\eps\to 0$  for all $\phi\in C^\infty_0(\Real^2)$.
The  sequence $g_\eps$ has a finite limit in $\mathcal{D}'(\Real^2)$ iff $\int_\Real V\,dn=0$.
In this case, we have
\begin{equation*}
\int_{\Real^2}g_\eps\phi\,dx\to \mu_1\int_\gamma\left(\partial_\nu\delta_\gamma+\kappa \delta_\gamma\right)\phi\,d\gamma,
\end{equation*}
which completes the proof.
\end{proof}


\section{Formal Asymptotics}

Now we will show how interface conditions \eqref{ConnectedCond} can be found by direct calculations.
Here we use the asymptotic methods similar to those in  \cite{GolLavr2000, GolGomLoboPer2004, GomNazarovPer2020}.
In the sequel, the normal vector field $\nu$ on $\gamma$ will be outward to the domain $\Omega^-$. Hence the local coordinate $r$ increases in the direction from $\Omega^-$ to $\Omega^+$. Also, it will be convenient to parameterize the curve $\gamma$ by  points of a circle. It  will allow us not to indicate every time that  functions on $\gamma$  are periodic on $s$.
Let $S$ be the circle of the  length $|\gamma|$.
Then $\omega_\eps$ is diffeomorphic to the cylinder $Q_\eps=S\times (-\eps, \eps)$.
We  denote by $\gamma_t$ the  curve that is obtained from $\gamma$ by flowing for ``time'' $t$ along the normal vector field, i.e.,
  $\gamma_t=\{x\in\Real^2\colon\; x=\alpha(s)+t\nu(s), \; s\in S\}$.
Then the boundary of $\omega_\eps$ consists of two curves $\gamma_{-\eps}$ and $\gamma_{\eps}$.

We  look for the  approximation to the eigenvalue $\lambda_\eps$ and the corresponding eigenfunction $u_\eps$ of \eqref{SpectralEqn} in the form
\begin{equation}\label{AsymptoticsUe}
\lambda^\eps\approx \lambda, \qquad u_\eps(x)\approx
\begin{cases}
  u(x)& \hbox{in \ }\Real^2\setminus \omega_\eps, \\
    v_0\left(s,\nep\right)+\eps v_1\left(s,\nep\right)+\eps^2 v_2\left(s,\nep\right)
&\hbox{in \ } \omega_\eps.
\end{cases}
\end{equation}
To match the approximations in $\omega_\eps$ and $\Real^2\setminus \omega_\eps$, we hereafter assume that
\begin{equation}\label{MatchingCnds}
  [u_\eps]_{\pm\eps}=0, \quad [\partial_r u_\eps]_{\pm\eps}=0,
\end{equation}
where  $[w]_t$ stands for  the jump of $w$ across $\gamma_t$ in the positive direction of the local coordinate $r$.
Since  $u_\eps$ solves \eqref{SpectralEqn} and the domain $\omega_\eps$ shrinks to $\gamma$, the function $u$ must be a solution of the equation
\begin{equation}\label{EqnForU}
-\Delta u+Wu= \lambda u \quad \hbox{in \ } \Real^2\setminus \gamma
\end{equation}
subject to appropriate transmission conditions on $\gamma$.
To find these conditions, we consider equation \eqref{SpectralEqn} in the local coordinates $(s,n)$, where $n=r/\eps$. By \eqref{GradientLaplasianInSN},
the Laplacian can be written as
\begin{equation*}
  \Delta =\frac1{1-\eps n\kappa}\left( \eps^{-2}\partial_n
  (1-\eps n\kappa)\partial_n +\partial_s
  \Big(\frac1{1-\eps n\kappa}\,\partial_s\Big)\right),
\end{equation*}
in the cylinder $Q=S\times (-1,1)$.
From this we readily deduce the  representation
\begin{equation}\label{LaplaceExpansion}
\Delta= \eps^{-2}\partial^2_n-\eps^{-1}\kappa(s)\partial_n
-n\kappa^2(s)\partial_n+\partial^2_s+\eps P_\eps,
\end{equation}
where $P_\eps$ is a PDE of the second order on $s$ and the first one on $n$ whose coefficients  are uniformly bounded in $Q$ with respect to $\eps$.

Substituting $v_0+\eps v_1+\eps^2 v_2$ and \eqref{LaplaceExpansion} into  equation \eqref{SpectralEqn} in particular yields
\begin{gather}\nonumber
-\pte^2 v_0+Vv_0=0,
\quad
-\pte^2 v_1+Vv_1=-\kappa\pte v_0-Uv_0,\\\label{EqnV2}
 -\pte^2 v_2+Vv_2=-(\kappa\pte +U) v_1
  +(\partial^2_s-n\kappa^2\partial_n-W(\,\cdot\,,0)+\lambda)v_0
\end{gather}
in  $Q$.
From \eqref{MatchingCnds} we see that necessarily
\begin{gather}\label{FittingCndsUV0}
 u^-(s)=v_0(s,-1),\qquad u^+(s)=v_0(s,1),
 \\\label{FittingCndspV0}
 \partial_n v_0(s,- 1)=0, \qquad \partial_n v_0(s, 1)=0, \\\label{FittingCndspV1}
 \partial_n v_1(s, -1)=\partial_r u^-(s), \qquad
 \partial_n v_1(s, 1)=\partial_r u^+(s).
\end{gather}
Combining \eqref{FittingCndspV0}--\eqref{FittingCndspV1}, we conclude that $v_0$ and $v_1$ solve the  problems
\begin{align}\label{problemV0}
&\begin{cases}
  -\pte^2 v_0+V(n)v_0=0 \quad \hbox{in \ } Q, \\
    \phantom{-}\partial_n v_0(s,- 1)=0, \quad \partial_n v_0(s, 1)=0, \quad s\in S;
\end{cases}
\\\label{problemV1}
&\begin{cases}
  -\pte^2 v_1+V(n)v_1=-\kappa(s)\pte v_0-U(s,n)v_0\quad \hbox{in \ } Q, \\
    \phantom{-}\partial_n v_1(s, -1)=\partial_r u^-(s), \quad
\partial_n v_1(s, 1)=\partial_r u^+(s), \quad s\in S
\end{cases}
\end{align}
respectively. Hence we have the boundary value problems in $Q$ including the ``non-ellip\-tic'' partial differential operator $-\pte^2+V$. These problems can also be  regarded as the boun\-da\-ry value problems on $\cI$ for ordinary differential equations which depend on the parameter $s\in S$.

\subsection{Case of zero-energy resonance}
Assume that  $-\frac{d^2}{dr^2}+V$ has a zero energy resonance with  the half-bound state $h$. Set $\cI=(-1,1)$. Since the support of $V$ lies in $\overline{\cI}$, the  function $h$ is  constant  outside $\cI$ as a bounded solution of the equation $h''=0$.
Therefore the restriction of $h$ to $\cI$ is a nonzero solution of the Neumann boundary value problem
\begin{equation}\label{NeumanProblem}
     -h''+Vh=0 \text{ \ in }\cI,\qquad   h'(-1)=0, \quad h'(1)=0.
\end{equation}
Hereafter, we fix $h$ by the additional condition $h(-1)=1$. Then $h(\pm\infty)=h(\pm 1)$ and $\theta=h(1)$.

In this case, \eqref{problemV0}  admits the infinitely many solutions $v_0(s,n)=a_0(s)h(n)$, where $a_0$ is an arbitrary function on $S$.
From \eqref{FittingCndsUV0} we deduce that
\begin{equation*}
  u^-=a_0, \qquad u^+=h(1) a_0=\theta a_0
\end{equation*}
and hence that $v_0(s,n)=u^-(s)h(n)$ and
\begin{equation}\label{RCond0}
     u^+=\theta u^-\quad\text {on }\gamma.
\end{equation}

Next, problem \eqref{problemV1} is in general unsolvable, since \eqref{problemV0} admits  nontrivial solutions.  To find solvabi\-li\-ty conditions, we rewrite the equation in \eqref{problemV1} in the form
$ -\pte^2 v_1+V(n)v_1=-\big(\kappa(s)h'(n)+U(s,n)h(n)\big)u^-(s)$,
multiply  by $a(s)h(n)$, $a\in L^2(S)$,  and then integrate over $Q$
\begin{multline}\label{IntV1H}
\int_{Q}\left(-\pte^2 v_1+V(n)v_1\right)a(s)h(n)\,dn\,ds
\\
=
-\int_{Q}\big(\kappa(s)h'(n)+U(s,n)h(n)\big)u^-(s)a(s)h(n)\, dn\,ds.
\end{multline}
Since $h$ is a solution of \eqref{NeumanProblem}, integrating by parts twice on the left-hand side yields
\begin{multline*}
\int_{S} \int_{\cI}\left(-\pte^2 v_1+Vv_1\right)a h\,dn \,ds
=-\int_{S}( \partial_n v_1 h-v_1 h')\big|_{n=-1}^{n=1}a\,ds\\-
\int_{S} \int_{\cI} a v_1\left(-h''+Vh\right)\,dn\,ds
=-\int_{S}\big(\theta\partial_r u^+-\partial_r u^-\big) a\,ds,
\end{multline*}
in view of the boundary conditions for $v_1$.
Hence \eqref{IntV1H} becomes
\begin{equation*}
\int_{S}\left(\theta\partial_r u^+-\partial_r u^-\right)a\,ds
=\int_{S} u^-a\int_{\cI}\left(\kappa hh'+U h^2\right)\,dn\,ds.
\end{equation*}
The equality  $hh'=\frac12 (h^2)'$ implies
\begin{equation}\label{IntHHpr}
\int_{\cI}hh'\,dn=\tfrac12 (h^2(1)-h^2(-1))=\tfrac{1}{2 }(\theta^2-1).
\end{equation}
 Therefore we obtain
\begin{equation*}
\int_{S}\left(\theta\partial_r u^+-\partial_r u^-\right)a\,ds
=\int_{S}\big(\textstyle\frac{1}{2}(\theta^2-1)\kappa+\mu \big)u^-a\,ds
\end{equation*}
for all  $a\in L^2(S)$, where $\mu(s)=\int_{\cI} U(s,n)h^2(n)\, dn$.
From this we deduce
\begin{equation*}
  \theta\partial_r u^+-\partial_r u^-
=\big(\textstyle\frac{1}{2}(\theta^2-1)\kappa+\mu \big) u^-\quad\text {on }\gamma,
\end{equation*}
which is necessary for  solvability of \eqref{problemV1}.
In view of the Fredholm alternative, this condition is also  sufficient. Moreover it is a jump condition for the normal derivative of $u$   at the interface $\gamma$, since $\partial_\nu u^\pm=\partial_r u^\pm$ on $\gamma$.
Therefore  $\lambda$ and $u$ in  \eqref{AsymptoticsUe} must solve the problem
\begin{gather}\label{LimitProblemEq}
-\Delta u+Wu=\lambda u \qquad \hbox{in  } \Real^2\setminus \gamma,
\\ \label{RConds}
 \phantom{-}u^+-\theta u^-=0,  \quad
\theta\partial_\nu u^+-\partial_\nu u^-
=\big(\textstyle\frac{1}{2}(\theta^2-1)\kappa+\mu \big) u^- \quad \hbox{on } \gamma,
\end{gather}
 which can be equivalently rewritten as the spectral equation $\cH u=\lambda u$.

Assume that $\lambda$ is an eigenvalue of  $\cH$ and $u$  is an eigenfunction for this eigenvalue.
Now we can calculated the trace $u^-$ on $\gamma$ and  finally determine $v_0(s,n)=u^-(s)h(n)$.
Since the second condition in \eqref{RConds} holds,
problem \eqref{problemV1} is solvable and $v_1$ is defined up to the term $a_1(s) h(n)$.
Let us fix  a solution of \eqref{problemV1} so that
\begin{equation}\label{V1At-1}
  v_1(s,-1)=0, \quad s\in S.
\end{equation}
Finally equation \eqref{EqnV2} admits a unique solution $v_2$ satisfying  the conditions
\begin{equation}\label{V2At-1}
  v_2(s, -1)=0, \quad \partial_n v_2(s, -1)=0, \quad s\in S.
\end{equation}
The functions $v_k$ are smooth in $Q$ due to the smoothness of  $V$, $U$ and $\kappa$. Recall also that $W$ is smooth in an neighbourhood of $\gamma$.
So we have constructed all terms in asymptotics \eqref{AsymptoticsUe}.

\subsection{Non-resonant case}
Now we suppose that  $-\frac{d^2}{dr^2}+V$ has no zero energy resonance. Then  problem \eqref{NeumanProblem}, and hence problem \eqref{problemV0}, admit the trivial solutions $h=0$ and $v_0=0$ only,  and    \eqref{FittingCndsUV0} imply $u^-=0$ and $u^+=0$ on $\gamma$. We thus get
\begin{equation*}
-\Delta u+Wu=\lambda u\quad \hbox{in \ } \Real^2\setminus \gamma,\qquad
 u|_{\gamma}=0.
\end{equation*}
Let us suppose that $\lambda$ is an eigenvalue of the direct sum
$\mathcal{D}^-\oplus\mathcal{D}^+$ and $u$ is the corresponding eigenfunction.
In this case, problem \eqref{problemV1} has the form
\begin{equation*}
\begin{cases}
    -\pte^2 v_1+V(n)v_1=0\quad \hbox{in \ } Q, \\
    \phantom{-}\partial_n v_1(s, -1)=\partial_\nu u^-, \qquad
\partial_n v_1(s, 1)=\partial_\nu u^+
\end{cases}
\end{equation*}
and admits a unique solution. Let us substitute $v_0=0$ into equation \eqref{EqnV2} and assume that $v_2$ is a solution of the Cauchy problem
\begin{equation*}
\begin{cases}
-\pte^2 v_2+V(n)v_2=-\kappa(s)\pte v_1 +U(s,n) v_1
  \quad \hbox{in \ } Q,
\\
   \phantom{-} v_2(s, -1)=0,
 \quad
 \partial_n v_2(s, -1)=0, \quad s\in S.
\end{cases}
\end{equation*}

\subsection{Quasimodes of $H_\eps$}
To prove that $\lambda$ belonging to either $\sigma(\cH)$ or $\sigma(\mathcal{D}^-\oplus\mathcal{D}^+)$ is an accumulation point for a sequence of eigenvalues $\lambda^\eps$ of $H_\eps$, we will apply the method of quasimodes.
Let $A$ be a self-adjoint operator in a Hilbert space $L$.
We say a pair $(a, \phi)\in \Real\times \dom A$ is a \textit{quasimode} of  $A$ with the accuracy $\delta$, if $\phi\neq 0$  and
$\|(A-a)\phi\|_L\leq\delta\|\phi\|_L$.

\begin{prop}[\hglue-0.1pt{\cite[p.139]{PDEVinitiSpringer}}]\label{LemQuasimodes}
   Assume $(a, \phi)$ is a quasimode of $A$ with accuracy $\delta>0$ and  the spectrum of $A$ is discrete in  the interval
$[a-\delta, a+\delta]$. Then there exists an eigenvalue $\lambda_*$ of  $A$ such that $|\lambda_*-a|\leq\delta$.
\end{prop}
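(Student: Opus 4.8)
The plan is to argue by contradiction using the spectral theorem for the self-adjoint operator $A$. First I would suppose that no eigenvalue of $A$ lies in the closed interval $[a-\delta, a+\delta]$. Since $A$ is self-adjoint, by the spectral theorem it admits a projection-valued measure $E(\cdot)$, and the spectral subset in $[a-\delta,a+\delta]$ consists (by hypothesis on discreteness there) only of eigenvalues; so the contradiction hypothesis forces $E([a-\delta,a+\delta])=0$, i.e. $\mathrm{dist}(a,\sigma(A))>\delta$. Actually it is cleaner to invoke the standard resolvent bound: when $a\notin\sigma(A)$, the resolvent $(A-a)^{-1}$ is a bounded operator with $\|(A-a)^{-1}\|=\mathrm{dist}(a,\sigma(A))^{-1}$.

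Next I would feed the quasimode $\phi$ into this bound. Write $\psi=(A-a)\phi$, so $\|\psi\|_L\le\delta\|\phi\|_L$ by the definition of a quasimode. Then $\phi=(A-a)^{-1}\psi$, hence
\begin{equation*}
\|\phi\|_L=\|(A-a)^{-1}\psi\|_L\le\|(A-a)^{-1}\|\,\|\psi\|_L\le \frac{\delta}{\mathrm{dist}(a,\sigma(A))}\,\|\phi\|_L.
\end{equation*}
Since $\phi\neq 0$ we may divide by $\|\phi\|_L$ to obtain $\mathrm{dist}(a,\sigma(A))\le\delta$. Combined with the contradiction hypothesis $\mathrm{dist}(a,\sigma(A))>\delta$ (which used that the only spectrum in $[a-\delta,a+\delta]$ could be eigenvalues, of which there are none), this is absurd. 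Therefore $\sigma(A)\cap[a-\delta,a+\delta]\neq\emptyset$, and because the spectrum is discrete there, this intersection point is an eigenvalue $\lambda_*$ of $A$ with $|\lambda_*-a|\le\delta$.

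The only point requiring a little care — and the mildest ``obstacle'' — is the spectral-theoretic input: one must use that an isolated point of the spectrum of a self-adjoint operator is an eigenvalue, and that the resolvent norm equals the reciprocal of the distance to the spectrum. Both are textbook facts (e.g. from the spectral theorem), so no genuine difficulty arises; the statement is essentially a packaging of the resolvent estimate. An alternative route, avoiding the resolvent entirely, is to use the spectral measure directly: if $\sigma(A)\cap[a-\delta,a+\delta]$ were empty one estimates
\begin{equation*}
\|(A-a)\phi\|_L^2=\int_{\sigma(A)}(t-a)^2\,d\|E(t)\phi\|_L^2\ge\delta^2\int_{\sigma(A)}d\|E(t)\phi\|_L^2=\delta^2\|\phi\|_L^2,
\end{equation*}
with strict inequality unless $\phi=0$, contradicting the quasimode accuracy; this gives $\sigma(A)\cap[a-\delta,a+\delta]\neq\emptyset$, and discreteness upgrades the point to an eigenvalue. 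Either presentation is short; I would include the resolvent version as the primary argument since it is the most transparent.
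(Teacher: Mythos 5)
Your argument is correct and is essentially the paper's own proof: both rest on the identity $\|(A-a)^{-1}\|=\mathrm{dist}(a,\sigma(A))^{-1}$ for self-adjoint $A$, applied to $\psi=(A-a)\phi$, with discreteness of the spectrum in $[a-\delta,a+\delta]$ used to upgrade the spectral point to an eigenvalue; the paper merely phrases it directly (treating $a\in\sigma(A)$ as a trivial case) rather than by contradiction. No gaps worth noting.
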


Since  its  proof  is  so  simple,  we  reproduce  it  here  for  the  reader's
convenience.
If $a\in \sigma(A)$, then $\lambda_*=a$. Otherwise  the distance $d_a$ from $a$ to the spectrum of $A$  can be computed as
\begin{equation*}
  d_a=\|(A-a)^{-1}\|^{-1}
  =\inf_{\psi\neq0}\frac{\|\psi\|_L}{\|(A-a)^{-1}\psi\|_L},
\end{equation*}
where $\psi$ is an arbitrary vector of $L$. Taking $\psi=(A-a)\phi$, we deduce
\begin{equation*}
  d_a\leq \frac{\|(A-a)\phi\|_L}{\|\phi\|_L}\leq \delta,
\end{equation*}
from which the assertion  follows.

\begin{figure}[b]
  \centering
  \includegraphics[scale=1.8]{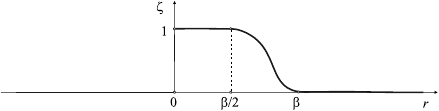}\\
  \caption{Plot of the function $\zeta$.}\label{FigPlotZeta}
\end{figure}
\smallskip

In order to construct the quasimodes of $H_\eps$, we must modify the approximation
\begin{equation*}
 \hat{v}_\eps(x)=
\begin{cases}
  u(x)& \hbox{in \ }\Real^2\setminus \omega_\eps, \\
    v_0\left(s,\nep\right)+\eps v_1\left(s,\nep\right)+\eps^2 v_2\left(s,\nep\right)
&\hbox{in \ } \omega_\eps
\end{cases}
\end{equation*}
obtained above.   The approximation does not in general belong to $\dom H_\eps$, because $\hat{v}_\eps$ has jump discontinuities on  $\partial\omega_\eps$.
Let us define the function $\zeta$ plotted in Fig.~\ref{FigPlotZeta}. This function is smooth outside the origin,  $\zeta(r)=1$ for $r\in [0,\beta/2]$ and $\zeta(r)=0$ in the set $\Real\setminus [0,\beta)$.
We  assume that $2\beta<\eps_*$, where $\eps_*$ is given by \eqref{EpsStar}. Set
\begin{equation}\label{EtaEps}
  \eta_\eps=\big([\hat{v}_\eps]_{\eps}+[\partial_\nu\hat{v}_\eps]_{\eps}\,
  (r-\eps)\big)\,\zeta(r-\eps)
  +\big([\hat{v}_\eps]_{-\eps}+[\partial_\nu\hat{v}_\eps]_{-\eps}\,(r+\eps)\big)
  \,\zeta(-r-\eps).
\end{equation}
It is easy to check  that $\eta_\eps$ and $\partial_r\eta_\eps$ have the same jumps across the boundary of $\omega_\eps$ as $\hat{v}_\eps$ and $\partial_\nu\hat{v}_\eps$ respectively. In addition, $\eta_\eps$ is different from zero in the set $\omega_{\beta+\eps}\setminus\omega_\eps$ only. Therefore the function
\begin{equation*}
 v_\eps(x)=
\begin{cases}
  u(x)-\eta_\eps(x)& \hbox{in \ }\Real^2\setminus \omega_\eps, \\
    v_0\left(s,\nep\right)+\eps v_1\left(s,\nep\right)+\eps^2 v_2\left(s,\nep\right)
&\hbox{in \ } \omega_\eps
\end{cases}
\end{equation*}
belongs to the domain of $\cH_\eps$. We have not changed $\hat{v}_\eps$ too much, since
\begin{equation}\label{EtaEpsEstimate}
  \sup_{x\in \Real^2\setminus \overline{\omega}_\eps}\big(|\eta_\eps(x)|+|\Delta\eta_\eps(x)|\big)\leq c\eps.
\end{equation}
It follows from  explicit formula \eqref{EtaEps} and the smallness of
 jumps of $\hat{v}_\eps$ and $\partial_\nu\hat{v}_\eps$ across $\partial\omega_\eps$. Indeed,
using \eqref{FittingCndsUV0}--\eqref{FittingCndspV1}, \eqref{V1At-1} and \eqref{V2At-1} for the case of resonance we deduce
 \begin{align*}{}
  &[\hat{v}_\eps]_{-\eps}=v_0(s,-1)-u(s,-\eps)
                  =u^-(s)-u(s,-\eps)=O(\eps),
  \\
  &\begin{aligned}{}
  [\hat{v}_\eps]_{\eps}=u(s,\eps)&-v_0(s,1)-\eps v_1(s,1)-\eps^2 v_2(s,1)\\
                  &=u(s,\eps)-\theta u^-(s)+O(\eps)
=u(s,\eps)-u^+(s)+O(\eps)=O(\eps),
   \end{aligned}
   \\
   &\begin{aligned}{}
  [\partial_\nu\hat{v}_\eps]_{-\eps}=\eps^{-1}\partial_n v_0(s,-1)+\partial_n v_1(s,&-1)-\partial_r u(s,-\eps)
                  \\
  &=\partial_r u^-(s)-\partial_r u(s,-\eps)=O(\eps),
   \end{aligned}
   \\
    &\begin{aligned}{}
  [\partial_\nu\hat{v}_\eps]_{\eps}=\partial_r u(s,\eps)-\eps^{-1}\partial_n v_0(s,1)&-\partial_nv_1(s,1)-\eps\partial_n v_2(s,1)+O(\eps)
                  \\
  &=\partial_r u^+(s)-\partial_r u(s,\eps)+O(\eps)=O(\eps),
   \end{aligned}
 \end{align*}
as $\eps\to 0$. Here we also have utilized  condition \eqref{RCond0} and the  inequality
\begin{equation*}
|u(s,\pm \eps)-u^\pm(s)|+|\partial_ru(s,\pm \eps)
-\partial_r u^\pm(s)|\leq c\eps.
\end{equation*}
Note that the eigenfunction $u$ is smooth in a neighbourhood of $\gamma$.
Obviously the jumps are also of order $O(\eps)$ in the non-resonant case,
when $v_0=0$ and $u^\pm=0$.

\begin{lem}\label{LemmaQuasimodesHeps}
 The pairs $(\lambda, v_\eps)$ constructed above are quasimodes of $H_\eps$ with the accuracy $O(\eps)$ as $\eps\to 0$.
\end{lem}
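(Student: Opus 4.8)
The plan is to bound $\|(H_\eps-\lambda)v_\eps\|_{L^2(\Real^2)}$ by splitting $\Real^2$ into the region $\Real^2\setminus\omega_\eps$, where $v_\eps=u-\eta_\eps$, and the tube $\omega_\eps$, where $v_\eps=v_0(s,r/\eps)+\eps v_1(s,r/\eps)+\eps^2 v_2(s,r/\eps)$. On $\Real^2\setminus\omega_\eps$ we have $V_\eps=0$ and $(-\Delta+W-\lambda)u=0$, because in the resonant case $u$ solves \eqref{LimitProblemEq} while in the non-resonant case $u$ is an eigenfunction of $\mathcal{D}^-\oplus\mathcal{D}^+$; hence there $(H_\eps-\lambda)v_\eps=-(-\Delta+W-\lambda)\eta_\eps$. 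Since $\eta_\eps$ is supported in the set $\omega_{\beta+\eps}\setminus\omega_\eps$, which is contained in a fixed bounded neighbourhood of $\gamma$ where $W$ is bounded, estimate \eqref{EtaEpsEstimate} gives $\|(H_\eps-\lambda)v_\eps\|_{L^2(\Real^2\setminus\omega_\eps)}\leq c\eps$.

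The heart of the argument is the estimate inside $\omega_\eps$. I would pass to the local coordinates $(s,n)$, $n=r/\eps$, insert the representation \eqref{LaplaceExpansion} of the Laplacian, expand the smooth potential as $W(\alpha(s)+\eps n\,\nu(s))=W(s,0)+O(\eps)$ by Taylor's formula, and collect powers of $\eps$ in $(-\Delta+W+V_\eps-\lambda)v_\eps$. The contributions of orders $\eps^{-2}$, $\eps^{-1}$ and $\eps^{0}$ cancel identically, precisely because $v_0$, $v_1$, $v_2$ were chosen to solve \eqref{problemV0}, \eqref{problemV1} and \eqref{EqnV2} (with $v_0=0$ in the non-resonant case). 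What remains is a function of the form $\eps\,R_\eps(s,r/\eps)$, collecting the action of $P_\eps$ on $v_0$, the lower-order parts of $-\eps\Delta v_1$ and $-\eps^2\Delta v_2$, the term $U v_2$, the first-order Taylor remainder of $W$, and $-\eps\lambda v_1-\eps^2\lambda v_2$; each of these has $L^\infty(Q)$-norm bounded uniformly in $\eps$, since $V$, $U$, $\kappa$ and $W$ near $\gamma$ are smooth, the $v_k$ are smooth on $Q$, and the coefficients of $P_\eps$ are $\eps$-uniformly bounded. Changing variables back to $x$, with $dx=(1-\eps n\kappa)\,\eps\,dn\,ds$ on $\omega_\eps$, then yields $\|(H_\eps-\lambda)v_\eps\|_{L^2(\omega_\eps)}\leq c\,\eps^{3/2}$.

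Adding the two contributions gives $\|(H_\eps-\lambda)v_\eps\|_{L^2(\Real^2)}\leq c\eps$. It remains to bound $\|v_\eps\|_{L^2(\Real^2)}$ away from zero. In the resonant case $u$ is a nontrivial eigenfunction of $\cH$, so $\|v_\eps\|_{L^2(\Real^2)}\geq\|u\|_{L^2(\Real^2\setminus\omega_\eps)}-\|\eta_\eps\|_{L^2(\Real^2)}\geq\tfrac12\|u\|_{L^2(\Real^2)}$ for all small $\eps$, using $\|\eta_\eps\|_{L^2}=O(\eps)$ from \eqref{EtaEpsEstimate} and $|\omega_\eps|\to 0$; the non-resonant case is the same, with $u$ a nontrivial eigenfunction of $\mathcal{D}^-\oplus\mathcal{D}^+$ and the tube contributing only $O(\eps^{3/2})$ to the norm. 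Combining the two bounds proves that $(\lambda,v_\eps)$ is a quasimode of $H_\eps$ with accuracy $O(\eps)$.

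The step I expect to be the main obstacle is the bookkeeping inside $\omega_\eps$: one must verify carefully that no negative power of $\eps$ survives after the cancellation of the orders $\eps^{-2}$, $\eps^{-1}$, $\eps^{0}$, in particular that the $\eps P_\eps$ piece of the Laplacian and the $\eps^2$-order derivatives of $v_1$ and $v_2$ really only produce $O(\eps)$ terms, and that replacing $W$ by $W(s,0)$ costs exactly $O(\eps)$. This is where the smoothness hypotheses on $V$, $U$, $\kappa$ and on $W$ near $\gamma$, together with the $\eps$-uniform bounds on $P_\eps$, are all used.
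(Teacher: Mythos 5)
Your proposal is correct and follows essentially the same route as the paper: split into $\Real^2\setminus\omega_\eps$ (where \eqref{EtaEpsEstimate} controls the $\eta_\eps$-correction) and $\omega_\eps$ (where the orders $\eps^{-2},\eps^{-1},\eps^0$ cancel by the construction of $v_0,v_1,v_2$ via \eqref{problemV0}, \eqref{problemV1}, \eqref{EqnV2}), then bound $\|v_\eps\|_{L^2}$ from below by $\tfrac12\|u\|_{L^2}$. The only difference is cosmetic: you exploit $|\omega_\eps|=O(\eps)$ to get $O(\eps^{3/2})$ inside the tube, whereas the paper simply uses $\sup_{\Real^2}|\varrho_\eps|\le c\eps$ together with $\supp\varrho_\eps\subset\omega_{2\beta}$, which already suffices for the stated $O(\eps)$ accuracy.
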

\begin{proof}

Write $\varrho_\eps=(H_\eps-\lambda)v_\eps$.
Thus \eqref{EqnForU} implies
\begin{equation*}
 \varrho_\eps=(-\Delta+W-\lambda)( u-\eta_\eps)=(-\Delta+W-\lambda)\eta_\eps
\end{equation*}
outside $\omega_\eps$. Therefore $\sup_{x\in\Real^2\setminus \omega_\eps}|\varrho_\eps(x)|\leq c_1\eps$, because of \eqref{EtaEpsEstimate}. Recall $\eta_\eps$ is a function of compact support.
Applying representation \eqref{LaplaceExpansion} of the Laplace operator in the local coordinates, we deduce
\begin{multline*}
-\Delta+W(x)+V_\eps(x)
=-\eps^{-2}\partial^2_n+\eps^{-1}\kappa\partial_n
+n\kappa^2\partial_n-\partial^2_s-\eps P_\eps
+W(s,\eps n)\\
+ \eps^{-2}V(n)+\eps^{-1}U(s,n) =\eps^{-2}\ell_0+\eps^{-1}\ell_1+\ell_2+W(s,\eps n)-\eps P_\eps
\end{multline*}
for $x\in \omega_\eps$, where $\ell_0=-\partial^2_n+V$, $\ell_1=\kappa\partial_n
+U$ and $\ell_2=n\kappa^2\partial_n-\partial^2_s$. Then
\begin{multline*}
  \varrho_\eps=(-\Delta+W+V_\eps -\lambda)v_\eps
  =\big(\eps^{-2}\ell_0+\eps^{-1}\ell_1+\ell_2+W(s,\eps n)-\eps P_\eps-\lambda\big)\big(v_0+\eps v_1+\eps^2 v_2\big)\\
  =\eps^{-2}\ell_0v_0+\eps^{-1}(\ell_0v_1+\ell_1v_0)+\big(\ell_0v_2+\ell_1v_1
  +(\ell_2+W(s,0)-\lambda)v_0\big)
  \\
  +(W(s,\eps n)-W(s,0))v_0+\eps \big(\ell_1v_2+(\ell_2+W(s,\eps n)-\lambda)
  (v_1+\eps v_2) - P_\eps v_\eps\big)
\end{multline*}
for $x\in \omega_\eps$. From our choice of $v_k$, we derive that the first three terms of the right-hand side vanish. The potential $W$ is a $C^\infty$-function  in a neighbourhood of $\gamma$, then we have
$\sup_{x\in\omega_\eps}|\varrho_\eps(x)|\leq c_2\eps$.
Hence
\begin{equation*}
  \|(H_\eps-\lambda)v_\eps\|_{L^2(\Real^2)}=
  \|\varrho_\eps\|_{L^2(\Real^2)}\leq |\omega_{2\beta}|^{1/2}\sup_{\Real^2}|\varrho_\eps|\leq c_3 \eps,
\end{equation*}
since $\supp \varrho_\eps\subset \omega_{\beta+\eps}\subset \omega_{2\beta}$ for $\eps$ small enough.
On the other hand, the main contribution  to the $L^2(\Real^2)$-norm of $v_\eps$ is given by the eigenfunction $u$. Therefore $\|v_\eps\|_{L^2(\Real^2)}\geq \frac{1}{2}\|u\|_{L^2(\Real^2)}$ for $\eps$ small enough.
 Finally, we obtain
\begin{equation*}
 \|(H_\eps-\lambda)v_\eps\|_{L^2(\Real^2)}\leq c_3 \eps
 \leq 2c_3 \eps\|u\|_{L^2(\Real^2)}^{-1}\,\|v_\eps\|_{L^2(\Real^2)}\leq c_4 \eps\,\|v_\eps\|_{L^2(\Real^2)},
\end{equation*}
and this is precisely the assertion of the lemma.
\end{proof}


\section{Proof of Main Results}
Let $\{\lambda^\eps\}_{\eps\in\cE}$ be a sequence of eigenvalues of  $H_\eps$ and $\{u_\eps\}_{\eps\in\cE}$ be the sequence of the corresponding eigenfunctions and $\|u_\eps\|_{L^2(\Real^2)}=1$. Let $\chi_{\eps}$ be the characteristic function of $\Real^2\setminus\omega_\eps$.

\begin{lem}\label{LemmaDconv}
  Assume that $\lambda^\eps\to \lambda$ and $u_\eps\to u$ in $L^2(\Real^2)$ weakly as $\cE\ni\eps\to 0$.

  (i) For any bounded or unbounded domain $D$ in $\Real^2$ such that $\overline{D}\cap\gamma=\emptyset$ the  eigenfunctions $u_\eps$ converge to $u$ in $W_2^2(D)$ weakly, and  $u$ solves the equation
  \begin{equation}\label{EqnForUoutside}
  -\Delta u+Wu=\lambda u\quad \text{in } \Real^2\setminus \gamma.
  \end{equation}

  (ii) $\chi_{\eps}\nabla u_\eps\to\nabla u$ in $L^2(\Omega^\pm)$ weakly.

 (iii) Treating $u_\eps(x)$ as $u_\eps(s,r)$, we have
 \begin{equation*}
   u_\eps(\,\cdot\,,-\eps)\to u^-\;\;\text{and}\;\; u_\eps(\,\cdot\,,\eps)\to u^+\;\;\text{in }L^2(S)\;\; \text{weakly}.
 \end{equation*}
\end{lem}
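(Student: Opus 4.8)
The plan is to establish the three assertions of Lemma~\ref{LemmaDconv} essentially in the order stated, exploiting uniform a priori bounds that follow from the eigenvalue equation \eqref{SpectralEqn} together with the fact that $V_\eps$ is supported in the shrinking tube $\omega_\eps$.

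\textbf{Step 1: interior elliptic estimates away from $\gamma$.}
Fix a domain $D$ with $\overline D\cap\gamma=\emptyset$. For $\eps$ small enough $\supp V_\eps\subset\omega_\eps$ is disjoint from $\overline D$, so on any slightly larger domain $D'$ with $D\Subset D'$, $\overline{D'}\cap\gamma=\emptyset$, the eigenfunction $u_\eps$ satisfies $-\Delta u_\eps+Wu_\eps=\lambda^\eps u_\eps$ in $D'$ with $W$ locally bounded (indeed smooth near $\gamma$, and $L^\infty_{loc}$ elsewhere). Since $\|u_\eps\|_{L^2(\Real^2)}=1$ and $\lambda^\eps\to\lambda$ is bounded, the interior $W_2^2$ estimate for $-\Delta$ gives $\|u_\eps\|_{W_2^2(D)}\leq C(D,D')\bigl(\|u_\eps\|_{L^2(D')}+\|(W-\lambda^\eps)u_\eps\|_{L^2(D')}\bigr)\leq C'$, uniformly in $\eps$. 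Hence along the sequence a subsequence converges weakly in $W_2^2(D)$; the weak $L^2(\Real^2)$-limit $u$ identifies the limit, so the whole sequence converges weakly in $W_2^2(D)$. Passing to the limit in the equation (the lower-order term converges since multiplication by the fixed locally bounded $W$ is continuous from $W_2^2(D)$ weak to $L^2(D)$ weak, using compactness of $W_2^2(D)\hookrightarrow L^2(D)$) yields \eqref{EqnForUoutside} in $\Real^2\setminus\gamma$, first on every such $D$ and then globally since these $D$ exhaust $\Real^2\setminus\gamma$. That $u\in\mathcal W$, i.e. $u|_{\Omega^+}\in\mathcal W^+$, follows from decay of $W$ at infinity exactly as for $\dom H_0$; this is where one invokes that $-\Delta+W$ has discrete spectrum.

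\textbf{Step 2: the gradient on $\Omega^\pm$ cut off from the tube.}
For (ii), exhaust $\Omega^\pm$ by domains $D_k$ with $\overline{D_k}\cap\gamma=\emptyset$, $D_k\uparrow\Omega^\pm$. On each $D_k$, part~(i) already gives $\nabla u_\eps\to\nabla u$ weakly in $L^2(D_k)$, and $\chi_\eps\equiv1$ on $D_k$ for $\eps$ small. The uniform bound $\|\chi_\eps\nabla u_\eps\|_{L^2(\Omega^\pm)}\leq\|\nabla u_\eps\|_{L^2(\Real^2\setminus\omega_\eps)}\leq C$ (from the global energy identity $\int|\nabla u_\eps|^2+\int(W+V_\eps)|u_\eps|^2=\lambda^\eps$, after controlling $\int_{\omega_\eps}V_\eps|u_\eps|^2$ — see the obstacle below) together with this pointwise-on-$D_k$ identification forces $\chi_\eps\nabla u_\eps\rightharpoonup\nabla u$ in $L^2(\Omega^\pm)$: any weak-limit point agrees with $\nabla u$ on every $D_k$, hence on $\Omega^\pm$, and boundedness makes the convergence hold for the full sequence.

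\textbf{Step 3: convergence of the traces on $\gamma_{\pm\eps}$.}
For (iii), work in the local coordinates $(s,r)$ on $\omega_{\eps_*}$. Write $u_\eps(\cdot,-\eps)$ as the trace on $\gamma_{-\eps}\subset\Omega^-$ of the $W_2^2(\Omega^-\cap\omega_{\eps_*})$-function $u_\eps$; by Step~1 (with $D$ a fixed thin collar in $\Omega^-$, say $\{-\eps_*/2<r<-\eps_*/4\}$) plus the fundamental-theorem-of-calculus representation $u_\eps(s,-\eps)=u_\eps(s,-\eps_*/3)-\int_{-\eps_*/3}^{-\eps}\partial_r u_\eps(s,r)\,dr$, one bounds $\|u_\eps(\cdot,-\eps)\|_{L^2(S)}$ uniformly using the $W_2^2$-bound on the collar and $\|\chi_\eps\nabla u_\eps\|_{L^2(\Omega^-)}\leq C$ from Step~2. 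For the identification of the limit, test against $\psi\in C^\infty(S)$: $\int_S u_\eps(s,-\eps)\psi\,ds-\int_S u(s,-\eps_*/3)\psi\,ds=-\int_S\psi\int_{-\eps_*/3}^{-\eps}\partial_r u_\eps\,dr\,ds+o(1)$, and the right side converges to $-\int_S\psi\int_{-\eps_*/3}^{0}\partial_r u\,dr\,ds$ by (ii) (the contribution of the shrinking sliver $r\in(-\eps,0)$ vanishes since $\|\partial_r u\|_{L^2}$ is finite), giving $\int_S u_\eps(\cdot,-\eps)\psi\to\int_S\bigl(u(\cdot,-\eps_*/3)-\int_{-\eps_*/3}^0\partial_r u\bigr)\psi=\int_S u^-\psi$. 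The same argument on the $\Omega^+$ side gives the statement for $u_\eps(\cdot,\eps)\rightharpoonup u^+$.

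\textbf{Main obstacle.}
The delicate point is the uniform $L^2(\Real^2\setminus\omega_\eps)$-bound on $\nabla u_\eps$ used in Steps~2--3: because $V_\eps$ is large and sign-indefinite on $\omega_\eps$ (and the operators $H_\eps$ need not be uniformly bounded below), the energy identity does not immediately yield $\int_{\Real^2}|\nabla u_\eps|^2\leq C$. One must control $\int_{\omega_\eps}(W+V_\eps)|u_\eps|^2$ separately. The natural remedy is a boundary-layer rescaling: set $n=r/\eps$, note $u_\eps$ restricted to $\omega_\eps$ solves, in $(s,n)$, an equation of the schematic form $-\partial_n^2 v+V(n)v=O(\eps)(\cdots)$ with Neumann-type data on $n=\pm1$ coming from the $O(1)$ traces $u_\eps(\cdot,\pm\eps)$ and their normal derivatives; multiplying by $\bar v$ and integrating by parts over the fixed cylinder $Q$ bounds $\int_Q|\partial_n v|^2$ and $\int_Q V|v|^2$ in terms of the boundary traces on $\gamma_{\pm\eps}$, which in turn are controlled by the interior estimates of Step~1 on fixed collars just outside $\omega_\eps$. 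Unwinding the scaling converts this into the desired uniform control of $\int_{\omega_\eps}|\nabla u_\eps|^2$ and hence, via the global identity, of $\int_{\Real^2\setminus\omega_\eps}|\nabla u_\eps|^2$. I expect this bootstrap — interior estimate $\Rightarrow$ trace control $\Rightarrow$ energy bound in the layer $\Rightarrow$ global gradient bound — to be the technical heart of the lemma, with everything else following from standard weak-compactness arguments.
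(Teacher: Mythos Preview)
Your handling of (i) matches the paper's. For (ii), though, the obstacle you flag is self-inflicted: the paper never appeals to the energy identity or to anything inside $\omega_\eps$. Since $V_\eps\equiv0$ on $\Real^2\setminus\omega_\eps$, testing the eigenvalue equation there against any $\psi\in C_0^\infty(\Real^2)$ gives $\int_{\Real^2\setminus\omega_\eps}\Delta u_\eps\,\psi=\int_{\Real^2\setminus\omega_\eps}(W-\lambda^\eps)u_\eps\psi$, and the right side is bounded in $\eps$ because $\psi$ has compact support and $W\in L^\infty_{loc}$. The uniform boundedness principle then yields $\|\chi_\eps\Delta u_\eps\|_{L^2(\Real^2)}\le c$, hence a uniform $W_2^2(\Real^2\setminus\omega_\eps)$ bound on $u_\eps$; (ii) follows by testing against $\psi\in C_0^\infty(\Omega^\pm)$ (where part (i) already applies) and using density. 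For (iii) the paper likewise avoids $\nabla u_\eps$: it multiplies \eqref{SpectralEqn} by the test function $a(s)(r-\eps)\zeta(r)$, supported in $\{\eps\le r<\beta\}$, and integrates by parts twice, expressing $\int_{\gamma_\eps}u_\eps a\,d\gamma$ purely as integrals of $u_\eps$ against functions bounded in $L^2$; weak $L^2$ convergence of $u_\eps$ alone then identifies the limit as $\int_\gamma u^+a\,d\gamma$. So (iii) is in fact independent of (ii).

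Your proposed bootstrap for (ii), by contrast, has two genuine problems. First, it is circular: the traces $u_\eps(\cdot,\pm\eps)$ that you need as input for the layer estimate sit at distance $\eps$ from $\gamma$, so Step~1 on a \emph{fixed} collar bounded away from $\gamma$ cannot control them without first integrating $\partial_r u_\eps$ across $\{-c<r<-\eps\}$ --- precisely the gradient bound that Step~2 was meant to supply. Second, even granting boundary data, multiplying the rescaled equation by $\bar v$ yields only $\int_Q(|\partial_n v|^2+V|v|^2)=\text{(boundary)}+O(\eps)$, and since $V$ is sign-indefinite this controls neither term separately; the Neumann form $\int_{\cI}(|v'|^2+V|v|^2)$ may well have negative eigenvalues and is certainly not coercive in the resonant case, where zero is already an eigenvalue.
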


\begin{proof}
\textit{(i)} Recall that  $\supp V_\eps$ lies in $\omega_\eps$ and chose $\eps$  so small that $D\cap \omega_\eps=\emptyset$. Then for any $\phi\in C_0^\infty(D)$ we conclude from \eqref{SpectralEqn} that
 \begin{equation*}
    \int_{D} \Delta u_\eps\phi\,dx=\int_{D} (W-\lambda^\eps)\, u_\eps\phi\,dx.
  \end{equation*}
The right-hand side  has a limit as $\cE\ni\eps\to 0$ by the assumptions, thus the left-hand side also converges for all $\phi\in C_0^\infty(D)$, i.e., $\Delta u_\eps\to \Delta u$ in $L^2(D)$ weakly. From this we deduce that $u_\eps$ converges to $u$ in  $W_2^2(D)$ weakly, and hence that
 \begin{equation*}
    \int_{D} \Delta u\phi\,dx=\int_{D} (W-\lambda)\,u\phi\,dx.
  \end{equation*}
Since $D$ is an arbitrary domain such that $\overline{D}\cap\gamma=\emptyset$,
we have
  \begin{equation*}
    \int_{\Real^2} \Delta u\phi\,dx=\int_{\Real^2} (W-\lambda)\,u\phi\,dx
  \end{equation*}
for all test functions $\phi\in C^\infty_0(\Real^2)$ for which $\supp \phi\cap\gamma=\emptyset$.
Therefore $u$ is a solution of \eqref{EqnForUoutside}.

\textit{(ii)}
We conclude from
 \begin{equation*}
 \int_{\Real^2\setminus\omega_\eps} \Delta u_\eps\psi\,dx=\int_{\Real^2\setminus\omega_\eps} (W-\lambda^\eps)\,u_\eps\psi\,dx,\quad \psi\in C^\infty_0(\Real^2)
  \end{equation*}
that the family of functionals $\chi_\eps \Delta u_\eps$ in $L^2(\Real^2)$
is pointwise bounded, since the right-hand side is bounded as $\eps\to 0$. In view of  the uniform boundedness principle, we have $\|\chi_\eps \Delta u_\eps\|_{L^2(\Real^2)}\leq c_1$,
from which the estimate $ \|u_\eps\|_{W_2^2(\Real^2\setminus\omega_\eps)}\leq c_2$ follows. Now for $\psi \in C_0^\infty(\Omega^+)$ and $\eps$ small enough, we have
\begin{equation*}
\int_{\Omega^+}(\chi_{\eps}\nabla u_\eps-\nabla u)\psi\,dx
  =\int_{\supp \psi}(\nabla u_\eps-\nabla u)\psi\,dx\to 0
\end{equation*}
as $\eps\to 0$, in view of \textit{(i)}.
Therefore $\chi_{\eps}\nabla u_\eps\to\nabla u$ in $L^2(\Omega^+)$ weakly, because
$C_0^\infty(\Omega^+)$ is dense in $L^2(\Omega^+)$. Similar considerations apply to $\Omega^-$.

\textit{(iii)}
Choose the  cutoff function
\begin{equation*}
  \zeta_\eps(r)=
  \begin{cases}
    (r-\eps)\zeta(r) & \text{if } r\geq \eps,\\
    \phantom{mm}0 &\text{otherwise,}
  \end{cases}
\end{equation*}
where $\zeta$ is  plotted in Fig.~\ref{FigPlotZeta}.
Let $a$ be a smooth function on $\gamma$.
Multiplying equation \eqref{SpectralEqn} by $a(s)\zeta_\eps(r)$ and integrating by parts yield
\begin{equation}\label{IntUepsDg}
  \int_{\gamma_\eps} u_\eps a \,d\gamma=\int_{\omega_{\eps,\beta}} (W-\lambda^\eps)u_\eps a\zeta_\eps\,dx-\int_{\omega_{\eps,\beta}} u_\eps \Delta (a\zeta_\eps)\,dx,
\end{equation}
since $\zeta_\eps(\eps)=0$ and $\zeta_\eps'(\eps+0)=1$. Here $\omega_{\eps,\beta}=\{x(s,r)\colon s\in S,\; \eps<r<\beta\}$
is the support of $a\zeta_\eps$.
Similarly, from \eqref{EqnForUoutside} we  obtain the equality
 \begin{equation*}
  \int_{\gamma} u^+ a \,d\gamma=\int_{\omega_{0,\beta}} (W-\lambda)u a\zeta_0\,dx-\int_{\omega_{0,\beta}} u \Delta (a\zeta_0)\,dx,
\end{equation*}
where $\zeta_0(r)=r\zeta(r)$.
It is evident that
\begin{equation*}
   \int_{\omega_{\eps,\beta}} (W-\lambda^\eps)u_\eps a\zeta_\eps\,dx\to
   \int_{\omega_{0,\beta}} (W-\lambda)u a\zeta_0\,dx,
\end{equation*}
 because $\zeta_\eps$ converges to $\zeta_0$ uniformly on $\Real$. Next, we have
\begin{equation*}
  \int_{\omega_{\eps,\beta}} u_\eps \Delta (a\zeta_\eps)\,dx=\int_{\omega_{\frac\beta2,\beta}} u_\eps \Delta (a\zeta_\eps)\,dx+\int_{\omega_{\eps,\frac\beta2}} u_\eps \Delta (a\zeta_\eps)\,dx.
\end{equation*}
The first integral of the right hand side converges to
\begin{equation*}
  \int_{\omega_{\frac\beta2,\beta}} u \Delta (a\zeta_0)\,dx,
\end{equation*}
since $\Delta (a\zeta_\eps)\to \Delta (a\zeta_0)$ uniformly on $[\frac\beta2,\beta]$.
Recalling \eqref{GradientLaplasianInSN}, we can write
\begin{equation*}
  \Delta (a\zeta_\eps)=\left(
  \zeta_\eps\partial_s(a'J^{-1})+ \partial_r(aJ\zeta_\eps')
  \right)\\
  =J^{-1}\left(
  (r-\eps)\partial_s(a'J^{-1})-a\kappa\right)
\end{equation*}
in the set $\omega_{\eps,\frac\beta2}$, since $\zeta_\eps(r)=r-\eps$ for $r\in[\eps,\frac\beta2]$. From this we  conclude that
\begin{multline*}
  \int_{\omega_{\eps,\frac\beta2}}u_\eps \Delta (a\zeta_\eps)\,dx=
 \int_\eps^\frac\beta2 \int_{S} u_\eps(s,r)\big((r-\eps)\partial_s(a'(s)J^{-1}(s,r))
 -a(s)\kappa(s)\big)\,ds\,dr\\
 \to \int_0^\frac\beta2 \int_{S} u(s,r)\big(r\partial_s(a'(s)J^{-1}(s,r))
 -a(s)\kappa(s)\big)\,ds\,dr=\int_{\omega_{0,\frac\beta2}}u \Delta (a\zeta_0)\,dx
\end{multline*}
as $\cE\ni\eps\to 0$, and finally  that
\begin{equation}\label{IntDelta}
  \int_{\omega_{\eps,\beta}} u_\eps\Delta (a\zeta_\eps)\,dx\to\int_{\omega_{0,\beta}} u \Delta (a\zeta_0)\,dx.
\end{equation}

Combining now \eqref{IntUepsDg}--\eqref{IntDelta} we at last deduce
$\int_{\gamma_\eps} u_\eps a \,d\gamma\to \int_{\gamma} u^+ a \,d\gamma$ for all $a\in C^\infty(\gamma)$, hence   $u_\eps(\,\cdot\,,\eps)\to u^+$ in $L^2(S)$ weakly. The proof of the weak convergence for   $u_\eps(\,\cdot\,,-\eps)$ is similar.
\end{proof}

\subsection{Proof of  Theorem~\ref{MainThrmRes}}
Assume first that  the operator $-\frac{d^2}{d r^2}+V$ possesses a zero-energy resonance.
Let $\Psi_\theta$ be the class of functions $\psi$ of compact support that are twice differentiable in $\Real^2\setminus \gamma$,   bounded together with  their first and second derivatives in the closure of  $\Omega^+$ and $\Omega^-$ and $\psi^+=\theta \psi^-$ on $\gamma$.  We also set
\begin{equation*}
\Phi=\{\phi\in W_2^1(\Real^2) \colon \phi \text{ has a compact support}\}.
\end{equation*}
If $\lambda$ and $u$ are the eigenvalue and the corresponding eigenfunction of  $\cH$, then
\begin{equation}\label{IdentityU}
   \int_{\Omega^+}\nabla u \nabla \psi\,dx+\int_{\Omega^-}\nabla u \nabla \psi\,dx
   +\int_{\Real^2}(W-\lambda)u\psi\,dx
   +\int_\gamma \Upsilon u^-\psi^-\,d\gamma=0
\end{equation}
for all $\psi\in \Psi_\theta$, where $\Upsilon=\tfrac12(\theta^2-1)\kappa+\mu$.
We want to take the limit as $\cE\ni\eps\to 0$ in the identity
\begin{equation}\label{IdentityUeps}
   \int_{\Real^2}\big(\nabla u_\eps \nabla \phi+
              (W+V_\eps-\lambda^\eps)u_\eps \phi\big)\,dx=0, \qquad \phi\in \Phi,
\end{equation}
and to obtain \eqref{IdentityU} for the limiting function $u$. But
identities \eqref{IdentityUeps} and \eqref{IdentityU} hold for the different sets of test functions. If $\theta\neq 0$, the set $\Psi_\theta$ is not contained in  $\Phi$, because the functions from $\Psi_\theta$ have   jump discontinuities on $\gamma$.

We introduce the family of operators $R_\eps\colon \Psi_\theta\to \Psi_0$ as follows.
 Let
  $h_1=h_1(n)$ and $h_2=h_2(s,n)$ be  solutions of the Cauchy problems
\begin{align}\label{ProblemH1}
&-h_1''+Vh_1=0,\quad  h_1(-1)=0, \;\; h_1'(-1)=1;
\\\label{ProblemH2}
& -h_2''+Vh_2=\kappa h'+U h,\quad
h_2(s,-1)=0, \;\; \partial_n h_2(s,-1)=0
\end{align}
on the interval $\cI$,  where $h$ is a half-bound state of $-\frac{d^2}{d r^2}+V$ such that $h(-1)=1$.
Given $\psi\in \Psi_\theta$, we write
\begin{equation}\label{Psi0Psi1}
  \psi_0^\eps(s,n)=\psi(s,-\eps)\,h(n), \quad
  \psi_1^\eps(s,n)=\partial_r\psi(s,-\eps)\,h_1(n)
     -\psi(s,-\eps)\,h_2(s,n).
\end{equation}
Then we set
\begin{equation*}
  \hat{\psi}_\eps(x)=
  \begin{cases}
    \psi(x), & \text{if } x\in \Real^2\setminus \omega_\eps,\\
     \psi_0^\eps(s,\tfrac{r}{\eps})
     +\eps \psi_1^\eps\left(s,\tfrac{r}{\eps}\right)& \text{if } x\in \omega_\eps.
  \end{cases}
\end{equation*}
The function $\hat{\psi}_\eps$ is continuous on $\gamma_{-\eps}$ by construction. But it does not in general belong to $W_2^1(\Real^2)$, because it  has a  discontinuity on $\gamma_{\eps}$. Let $R_\eps\psi=\hat{\psi}_\eps+\rho_\eps$, where
\begin{equation*}
\rho_\eps(x)=
\begin{cases}
  -[\hat{\psi}_\eps]_{\eps}\,\zeta(r-\eps),& \text{if }x\in
  \omega_{2\beta}\setminus \omega_\eps,\\
  \phantom{-}0,& \text{otherwise}.
\end{cases}
\end{equation*}
The direct calculations show that $[R_\eps\psi]_{\eps}=0$  and, therefore, $R_\eps\psi$ belongs to $W_2^1(\Real^2)$.
We immediately see that $R_\eps\psi\to \psi$ in $L^2(\Real^2)$ as $\eps\to 0$,
since $\psi_0^\eps$ and $\psi_1^\eps$ are bounded in the small set $\omega_\eps$ and
\begin{multline}\label{JumpHatPsi}
[\hat{\psi}_\eps]_{\eps}(s)=\psi(s,\eps)-\psi_0^\eps(s,1)-\eps\psi_1^\eps(s,1) =\psi(s,\eps)-\theta\psi(s,-\eps)+O(\eps)\\
  =
  \psi(s,+0)-\theta\psi(s,-0)+O(\eps)=O(\eps)
\end{multline}
as  $\eps\to 0$ uniformly in $S$.

\begin{prop}\label{PropIntomegaEps}
For any $\psi\in \Psi_\theta$, we have
  \begin{align}\label{IntInLocal1}
   &\int_Q (\partial_n u_\eps \partial_n \psi_0^\eps+Vu_\eps \psi_0^\eps)J_\eps\,dn\,ds=\eps \int_Q \kappa u_\eps \partial_n\psi_0^\eps\,dn\,ds,
    \\
   &\begin{aligned}\label{IntInLocal2}
    \int_Q (\partial_n u_\eps\partial_n \psi_1^\eps&+Vu_\eps \psi_1^\eps+Uu_\eps \psi_0^\eps)J_\eps\,dn\,ds
    \\
    &=\int_S \Big(u_\eps(s,-\eps)\partial_r \psi(s,-\eps)\big(1+\eps \kappa(s)\big)
    \\
    &-
    \theta^{-1}u_\eps(s,\eps)\big(\partial_r\psi(s,-\eps)-\Upsilon(s)\psi(s,-\eps)\big)(1-\eps \kappa(s))\Big)\,ds
    \\
&-\int_Q \kappa u_\eps\, \partial_n\psi_0^\eps\,dn\,ds+
\eps\int_Q \kappa u_\eps (\kappa \partial_n\psi_0^\eps- \partial_n \psi_1^\eps)\,dn\,ds,
    \end{aligned}
  \end{align}
where $J_\eps(s,n)=1-\eps n \kappa(s)$ and $\psi_k^\eps$ are given by \eqref{Psi0Psi1}.
\end{prop}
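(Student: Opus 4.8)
The plan is to verify both identities by a direct computation carried out in the rescaled local coordinates $(s,n)$, $n=r/\eps$, on the tube $\omega_\eps$, integrating by parts only in the normal variable $n$. No use is made of the eigenvalue equation for $u_\eps$: these are identities for the profiles $\psi_0^\eps$, $\psi_1^\eps$, valid for any $u_\eps$ that is $W_2^2$ near $\gamma$. The ingredients are: the volume element on $\omega_\eps$ is $\eps J_\eps(s,n)\,dn\,ds$ with $J_\eps=1-\eps n\kappa$ and $\partial_n J_\eps=-\eps\kappa$; the arc-length measures on $\gamma_{\pm\eps}$ are $(1\mp\eps\kappa(s))\,ds=J_\eps(s,\pm1)\,ds$; and the profiles satisfy, on $\cI$, the ODEs $-h''+Vh=0$ with $h'(\pm1)=0$, and $-\partial_n^2\psi_1^\eps+V\psi_1^\eps=-\psi(s,-\eps)(\kappa h'+Uh)$, the latter being immediate from \eqref{ProblemH1}--\eqref{ProblemH2} and \eqref{Psi0Psi1}.

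For \eqref{IntInLocal1}, since $\psi_0^\eps(s,n)=\psi(s,-\eps)\,h(n)$ factorizes I pull $\psi(s,-\eps)$ out of the $n$-integral and integrate $\int_{\cI}\partial_n u_\eps\,h'(n)\,J_\eps\,dn$ by parts; the boundary contributions vanish because $h'(\pm1)=0$ (cf.\ \eqref{NeumanProblem}). Then $\partial_n(h'J_\eps)=h''J_\eps-\eps\kappa h'$, so by $h''=Vh$ the $VhJ_\eps$ term cancels $\int Vu_\eps\psi_0^\eps J_\eps$, leaving exactly the $\eps\kappa h'$ remainder; integrating over $s$ gives \eqref{IntInLocal1}.

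For \eqref{IntInLocal2} I argue the same way with $\psi_1^\eps$. Integrating $\int_{\cI}\partial_n u_\eps\,\partial_n\psi_1^\eps\,J_\eps\,dn$ by parts and inserting the ODE above: the $V\psi_1^\eps$ term cancels $\int Vu_\eps\psi_1^\eps J_\eps$; the $Uh$ term cancels the extra $\int Uu_\eps\psi_0^\eps J_\eps$ on the left; the $\kappa h'$ term becomes $-\int_Q\kappa u_\eps\,\partial_n\psi_0^\eps\,J_\eps$, which equals $-\int_Q\kappa u_\eps\,\partial_n\psi_0^\eps$ up to an $O(\eps)$ correction coming from $J_\eps=1-\eps n\kappa$; and the term $\partial_n J_\eps=-\eps\kappa$ supplies the $\eps\int_Q\kappa u_\eps\,\partial_n\psi_1^\eps$ remainder. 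The boundary terms at $n=\pm1$ no longer vanish and reproduce the $\int_S(\cdots)\,ds$ part of \eqref{IntInLocal2}: at $n=-1$, $\partial_n\psi_1^\eps(s,-1)=\partial_r\psi(s,-\eps)$ since $h_1'(-1)=1$ and $\partial_n h_2(s,-1)=0$; at $n=+1$ one needs $h_1'(1)$ and $\partial_n h_2(s,1)$. The value $h_1'(1)=\theta^{-1}$ follows from constancy of the Wronskian $hh_1'-h'h_1$ (equal to $1$ at $n=-1$ and to $\theta h_1'(1)$ at $n=+1$). The value of $\partial_n h_2(s,1)$ is obtained by pairing the equation for $h_2$ with $h$ via the Lagrange identity $\int_{\cI}(h_2''h-h_2h'')\,dn=[h_2'h-h_2h']_{-1}^{1}$: the right side collapses to $\theta\,\partial_n h_2(s,1)$ by the boundary data of $h$ and $h_2$, while the left side, after inserting $h_2''=Vh_2-\kappa h'-Uh$ and $h''=Vh$, equals $-\kappa\int_{\cI}hh'\,dn-\int_{\cI}Uh^2\,dn$, which by \eqref{IntHHpr} is $-\bigl(\tfrac12(\theta^2-1)\kappa+\mu\bigr)=-\Upsilon$. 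Thus $\partial_n h_2(s,1)=-\Upsilon/\theta$, so $\partial_n\psi_1^\eps(s,1)$ is a linear combination of $\partial_r\psi(s,-\eps)$ and $\Upsilon(s)\psi(s,-\eps)$ with coefficients $\theta^{-1}$; multiplied by $u_\eps(s,\eps)$ and $J_\eps(s,1)=1-\eps\kappa$ this is precisely the $(1-\eps\kappa)$-boundary term. This step is where the curvature coupling $\tfrac12(\theta^2-1)\kappa$ and the potential coupling $\mu$ enter the limiting interface condition \eqref{RConds}. Finally, every term not displayed on the right-hand sides carries an explicit factor $\eps$ and is uniformly bounded in $\eps$, since $\psi,h,h_1,h_2,\kappa,U$ are all smooth on the fixed cylinder $Q$.

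The only real difficulty is the sign/order bookkeeping: pinning down the orientation of the normal (hence the $1\mp\eps\kappa$ weights on $\gamma_{\pm\eps}$ and the sign of $\partial_n J_\eps$), correctly evaluating $\partial_n h_2(s,1)$ — its sign, and the fact that one must pair against $h$ and not $h_1$ — since that is the one place where the geometry of $\gamma$ feeds into the answer, and then checking that all the unnamed $\eps$-order terms are genuine remainders rather than $O(1)$ contributions, the last being immediate from smoothness of the data on $Q$.
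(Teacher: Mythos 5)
Your proposal is correct and follows essentially the same route as the paper's own proof: integration by parts in $n$ over $Q$ using $h'(\pm1)=0$, $\partial_n J_\eps=-\eps\kappa$ and the equation $-\partial_n^2\psi_1^\eps+V\psi_1^\eps=-\kappa\partial_n\psi_0^\eps-U\psi_0^\eps$, with the boundary data $h_1'(1)=\theta^{-1}$ obtained from the Wronskian and $\partial_n h_2(s,1)=-\theta^{-1}\Upsilon$ from the Lagrange identity paired with $h$ via \eqref{IntHHpr}, exactly as in \eqref{h1At1Theta} and \eqref{pdH2IsUpsilon}. No essential difference from the paper's argument.
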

\begin{proof}
 The function $\psi_0^\eps$ solves  $-\partial_n^2v+Vv=0$ in $Q=S\times \cI$ and satisfies the conditions $h'(-1)=h'(1)=0$. Then
\begin{multline}\label{ReasAs}
  0=\int_Q u_\eps(-\partial_n^2\psi_0^\eps+V \psi_0^\eps)J_\eps\,dn\,ds
  =-\int_S\psi(s,-\eps) ( u_\eps J_\eps h')\big|_{n=-1}^{n=1}\,ds
  \\+
  \int_Q (\partial_n u_\eps \, \partial_n \psi_0^\eps+Vu_\eps \psi_0^\eps)J_\eps\,dn\,ds+
  \int_Q u_\eps\,\partial_n J_\eps \,\partial_n \psi_0^\eps\,dn\,ds
  \\=\int_Q (\partial_n u_\eps \, \partial_n \psi_0^\eps+Vu_\eps \psi_0^\eps)J_\eps\,dn\,ds-\eps
  \int_Q \kappa u_\eps\,\partial_n \psi_0^\eps\,dn\,ds,
\end{multline}
from which \eqref{IntInLocal1} follows.
Since $h(1)=\theta$, the Lagrange identity $(h_1h'-h_1'h)|_{-1}^1=0$ for  \eqref{ProblemH1} implies
\begin{equation}\label{h1At1Theta}
  h_1'(1)=\theta^{-1}.
\end{equation}
Multiplying the equation in \eqref{ProblemH2} by $h$ and  integrating by parts twice yield
\begin{equation*}
 (h'h_2-h\,\partial_n h_2)\big|_{-1}^1=\kappa(s)\int_{\cI}hh'\,dn
  +\int_{\cI}U(s,n)h^2(n)\, dn.
\end{equation*}
Recalling now \eqref{IntHHpr}, we derive that $\theta \,\partial_n h_2(s,1)=-\tfrac{1}{2}(\theta^2-1)\kappa(s)-\mu(s)$ and finally that
\begin{equation}\label{pdH2IsUpsilon}
\partial_n h_2(s,1)=-\theta^{-1}\Upsilon(s).
\end{equation}
Next, $\psi_1^\eps$ is a solution of $-\partial_n^2v+Vv=-\kappa \partial_n \psi_0^\eps-U\psi_0^\eps$,
which follows from \eqref{ProblemH1} and \eqref{ProblemH2}. Hence
\begin{equation}\label{IntIdenForPsi1}
  \int_Qu_\eps (-\partial_n^2 \psi_1^\eps+V \psi_1^\eps+U \psi_0^\eps)J_\eps\,dn\,ds=-\int_Q \kappa u_\eps\, \partial_n\psi_0^\eps J_\eps\,dn\,ds.
\end{equation}
On the other hand,  integrating by parts with respect to $n$,  we find
\begin{multline*}
  -\int_Q u_\eps \,\partial_n^2 \psi_1^\eps J_\eps\,dn\,ds
  =
  \int_Q (J_\eps\partial_n u_\eps-
  \eps \kappa u_\eps )\partial_n \psi_1^\eps\,dn\,ds
  \\
  -
  \int_Su_\eps(s,\eps n) J_\eps(s,n)\big(\partial_r\psi(s,-\eps)\,h_1'(n)
     -\psi(s,-\eps)\,\partial_n h_2(s,n)\big) \Big|_{n=-1}^{n=1}\,ds
  \\
  =
  \int_Q (J_\eps\partial_n u_\eps-
  \eps \kappa u_\eps )\partial_n \psi_1^\eps\,dn\,ds
  \\-
  \int_S \bigg(\theta^{-1}u_\eps(s,\eps)\big(\partial_r\psi(s,-\eps)
    -\Upsilon(s)\psi(s,-\eps)\big)(1-\eps \kappa(s))
    \\
   -u_\eps(s,-\eps)\partial_r \psi(s,-\eps)\big(1+\eps \kappa(s)\big)
\Big)\,ds,
\end{multline*}
in view of initial conditions \eqref{ProblemH1}, \eqref{ProblemH2} and equalities \eqref{h1At1Theta},  \eqref{pdH2IsUpsilon}. Substituting
the last equality into \eqref{IntIdenForPsi1}, we obtain \eqref{IntInLocal2}.
\end{proof}

\begin{lem}\label{LemmaGradUepsConv}
Under the assumptions of Lemma~\ref{LemmaDconv}, we have
  \begin{align}\label{Assert1}
    &\int_{\Real^2\setminus\omega_\eps}\nabla u_\eps \nabla (R_\eps\psi)\,dx\to
     \int_{\Omega^+}\nabla u \nabla \psi\,dx+\int_{\Omega^-}\nabla u \nabla \psi\,dx,
     \\\label{Assert2}
  &\int_{\omega_\eps}\big(\nabla u_\eps \nabla (R_\eps\psi)+  V_\eps u_\eps R_\eps\psi\big)\,dx\to \int_\gamma \Upsilon u^- \psi^-\,d\gamma
  \end{align}
  as $\cE\ni\eps\to 0$ for all $\psi\in \Psi_\theta$.
\end{lem}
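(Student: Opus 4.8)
The plan is to split the two assertions into the ``outer'' region $\Real^2\setminus\omega_\eps$ and the ``inner'' region $\omega_\eps$, and to exploit the weak $W_2^2$-convergence established in Lemma~\ref{LemmaDconv}(i)--(ii) together with the algebraic identities of Proposition~\ref{PropIntomegaEps}. For \eqref{Assert1}, I would first observe that $R_\eps\psi$ differs from $\psi$ only on the thin shell $\omega_{2\beta}\setminus\omega_\eps$, where by \eqref{JumpHatPsi} the correction $\rho_\eps$ satisfies $\sup(|\rho_\eps|+|\nabla\rho_\eps|)=O(\eps)$; since $\chi_\eps\nabla u_\eps$ is bounded in $L^2$ (shown inside the proof of Lemma~\ref{LemmaDconv}(ii)), the contribution of $\rho_\eps$ is $O(\eps)$. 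On $\Real^2\setminus\omega_{2\beta}$ one has $R_\eps\psi=\psi$ with $\nabla\psi$ fixed, so $\chi_\eps\nabla u_\eps\to\nabla u$ weakly in $L^2(\Omega^\pm)$ gives the stated limit; the part over $\omega_{2\beta}\setminus\omega_\eps$ with the unmodified $\psi$ tends to zero because $\nabla\psi\in L^2$ is fixed while the domain shrinks and $\chi_\eps\nabla u_\eps$ is $L^2$-bounded.

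The substance is in \eqref{Assert2}. Here I would pass to the local coordinates $(s,n)$, $n=r/\eps$, so that $dx=\eps J_\eps\,dn\,ds$ with $J_\eps=1-\eps n\kappa$, and write $R_\eps\psi=\psi_0^\eps+\eps\psi_1^\eps$ on $\omega_\eps$ (the cutoff $\rho_\eps$ vanishes there). Using $\nabla=\eps^{-1}\partial_n\,\nu+J^{-1}\partial_s\,\alpha$ from \eqref{GradientLaplasianInSN}, the integral $\int_{\omega_\eps}(\nabla u_\eps\nabla(R_\eps\psi)+V_\eps u_\eps R_\eps\psi)\,dx$ expands, after multiplying through by the appropriate powers of $\eps$ and recalling $V_\eps=\eps^{-2}V+\eps^{-1}U$, into exactly the left-hand sides of \eqref{IntInLocal1} and \eqref{IntInLocal2} plus tangential terms of the form $\eps\int_Q J_\eps^{-1}\partial_s u_\eps\,\partial_s(R_\eps\psi)\,dn\,ds$ and lower-order remainders. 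The tangential term is $O(\eps)$ because $\|u_\eps\|_{W_2^1(\omega_\eps)}$ and $\|\partial_s\psi_k^\eps\|$ are controlled (again via the $W_2^2$-bound on $u_\eps$ outside $\omega_\eps$ and trace/extension estimates, or more simply by the boundedness of $\partial_s u_\eps$ on the shell inherited from Lemma~\ref{LemmaDconv}). Adding \eqref{IntInLocal1} and $\eps$ times a rescaled version of \eqref{IntInLocal2} — in fact the correct bookkeeping is that \eqref{Assert2} equals the left side of \eqref{IntInLocal1} plus the left side of the $O(\eps^0)$-part coming from $\eps\psi_1^\eps$, which is \eqref{IntInLocal2} — causes the bulk integrals over $Q$ to cancel against each other (the $-\int_Q\kappa u_\eps\partial_n\psi_0^\eps$ from \eqref{IntInLocal2} against the $\eps\int_Q\kappa u_\eps\partial_n\psi_0^\eps$ from \eqref{IntInLocal1} up to $O(\eps)$), leaving only the boundary integral
\begin{equation*}
  \int_S\Big(u_\eps(s,-\eps)\partial_r\psi(s,-\eps)(1+\eps\kappa)-\theta^{-1}u_\eps(s,\eps)\big(\partial_r\psi(s,-\eps)-\Upsilon\psi(s,-\eps)\big)(1-\eps\kappa)\Big)\,ds
\end{equation*}
plus $O(\eps)$.

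Finally I would pass to the limit in this boundary term using Lemma~\ref{LemmaDconv}(iii): $u_\eps(\,\cdot\,,\mp\eps)\to u^\mp$ weakly in $L^2(S)$ while $\partial_r\psi(s,-\eps)\to\partial_\nu\psi^-(s)$ and $\psi(s,-\eps)\to\psi^-(s)$ strongly (indeed uniformly, since $\psi\in\Psi_\theta$ is smooth up to $\gamma$ from the $\Omega^-$ side); the factors $1\pm\eps\kappa\to1$ uniformly. Hence the boundary integral tends to $\int_S\big(u^-\partial_\nu\psi^- - \theta^{-1}u^+(\partial_\nu\psi^- - \Upsilon\psi^-)\big)\,ds$, and since $u^+=\theta u^-$ for the eigenfunction $u\in\dom\cH$ this collapses to $\int_\gamma\Upsilon u^-\psi^-\,d\gamma$, which is \eqref{Assert2}. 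The main obstacle I anticipate is the uniform control of the tangential derivative $\partial_s u_\eps$ on the shrinking shell $\omega_\eps$ needed to discard the $O(\eps)$ remainders: one cannot simply invoke the interior $W_2^2$-bound there, so I would handle it either by a Caccioppoli-type estimate on a slightly larger fixed collar $\omega_{2\beta}$ (where the potential $V_\eps$ is supported in $\omega_\eps\subset\omega_{2\beta}$ and $u_\eps$ solves an equation with $L^\infty_{loc}$ coefficients) or by noting that the bulk terms in \eqref{IntInLocal1}--\eqref{IntInLocal2} are genuinely identities, so only the explicitly displayed $O(\eps)$-terms there need estimating, each of which already carries a factor $\eps$ against an $L^2(\omega_\eps)$-bounded quantity.
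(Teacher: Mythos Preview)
Your overall strategy matches the paper's proof closely: the treatment of \eqref{Assert1} via the weak convergence $\chi_\eps\nabla u_\eps\to\nabla u$ in $L^2(\Omega^\pm)$ together with $\sup(|\rho_\eps|+|\nabla\rho_\eps|)=O(\eps)$, and the reduction of \eqref{Assert2} to Proposition~\ref{PropIntomegaEps} followed by Lemma~\ref{LemmaDconv}\,(iii), are exactly what the paper does.

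There is, however, one genuine gap. You correctly single out the tangential term
\[
\eps\int_Q J_\eps^{-1}\,\partial_s u_\eps\,\partial_s\psi_\eps\,dn\,ds
\]
as the obstacle, but neither of your two proposed fixes works. A Caccioppoli estimate on the fixed collar $\omega_{2\beta}$ cannot yield a uniform bound on $\nabla u_\eps$ over $\omega_\eps$: the potential $V_\eps$ is of size $\eps^{-2}$ there, not uniformly $L^\infty_{\rm loc}$, so the zero-order term in any such inequality blows up like $\eps^{-2}$, giving only $\|\partial_s u_\eps\|_{L^2(Q,dn\,ds)}=O(\eps^{-3/2})$ and hence a tangential contribution of order $\eps^{-1/2}$, which does not vanish. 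Your second fix --- appealing to the exact identities of Proposition~\ref{PropIntomegaEps} --- does dispose of the remainder terms \emph{inside} those identities (they involve only $u_\eps$, not its derivatives), but the tangential term is not part of that proposition at all; it is a separate summand in the expansion of $\int_{\omega_\eps}\nabla u_\eps\nabla\psi_\eps\,dx$ and genuinely contains $\partial_s u_\eps$.

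The paper's resolution is a one-line integration by parts in the periodic variable $s$ (no boundary terms on $S$):
\[
\int_Q \partial_s u_\eps\,\partial_s\psi_\eps\,J_\eps^{-1}\,dn\,ds
=-\int_Q u_\eps\,\partial_s\bigl(J_\eps^{-1}\partial_s\psi_\eps\bigr)\,dn\,ds.
\]
Since $\partial_s(J_\eps^{-1}\partial_s\psi_\eps)$ is bounded on $Q$ uniformly in $\eps$ (only smoothness of $\kappa$ and of $\psi$ up to $\gamma$ is used) and $\|u_\eps(\,\cdot\,,\eps\,\cdot\,)\|_{L^2(Q,dn\,ds)}\le c\,\eps^{-1/2}$ from $\|u_\eps\|_{L^2(\Real^2)}=1$, the tangential term is $O(\eps^{1/2})$ and vanishes in the limit. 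The same $O(\eps^{-1/2})$ bound on $u_\eps$ in $L^2(Q)$ also kills the various $\eps\int_Q\kappa u_\eps(\cdots)\,dn\,ds$ remainders coming from Proposition~\ref{PropIntomegaEps}, so with this single correction your argument goes through.

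One minor point: in the last step you invoke $u^+=\theta u^-$ to collapse the boundary integral to $\int_\gamma\Upsilon u^-\psi^-\,d\gamma$. At this stage $u$ is only the weak limit of $u_\eps$, and the relation $u^+=\theta u^-$ has not yet been established; it is to be \emph{derived} from the resulting variational identity by varying $\partial_\nu\psi^-$ independently of $\psi^-$. The paper makes the same slip, so this is a shared informality rather than a defect peculiar to your plan.
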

\begin{proof}
Set $\psi_\eps=R_\eps \psi$.
Recalling \eqref{GradientLaplasianInSN}, we write
\begin{multline*}
  \int_{\Real^2\setminus\omega_\eps}\nabla u_\eps \nabla \psi_\eps\,dx=
  \int_{\Omega^+}\chi_\eps\nabla u_\eps \nabla \psi\,dx
  +\int_{\Omega^-}\chi_\eps\nabla u_\eps \nabla \psi\,dx\\
  -\int_\eps^{2\beta}\int_S [\hat{\psi}_\eps]_{\eps}\partial_ru_\eps \zeta'(r-\eps)J\,ds\,dr-
  \int_\eps^{2\beta}\int_S \zeta(r-\eps)\partial_su_\eps\partial_s[\hat{\psi}_\eps]_{\eps} J^{-1}\,ds\,dr.
\end{multline*}
Hence  assertion \eqref{Assert1} follows from  Lemma~\ref{LemmaDconv} \textit{(ii)} and \eqref{JumpHatPsi}.
Next, we have
 \begin{multline}\label{NeedForTh2}
     \int_{\omega_\eps}\big(\nabla u_\eps \nabla \psi_\eps+  V_\eps u_\eps \psi_\eps\big)\,dx
     =\eps^{-1}\int_Q (\partial_n u_\eps \partial_n \psi_0^\eps+Vu_\eps \psi_0^\eps)J_\eps\,dn\,ds
     \\+\int_Q  (\partial_n u_\eps\partial_n \psi_1^\eps+Vu_\eps \psi_1^\eps+Uu_\eps \psi_0^\eps)J_\eps\,dn\,ds
     +\eps\int_Q U u_\eps \psi_1^\eps J_\eps\,dn\,ds
     \\+\eps^2\int_Q \partial_s u_\eps \,\partial_s \psi_\eps J_\eps^{-1} \,dn\,ds.
  \end{multline}
 In view of Proposition~\ref{PropIntomegaEps}, we deduce
  \begin{multline}\allowdisplaybreaks
  \label{IntOmegaEpsTo}
     \int_{\omega_\eps}\big(\nabla u_\eps \nabla \psi_\eps+  V_\eps u_\eps \psi_\eps\big)\,dx
     = \int_S \Big(u_\eps(s,-\eps)\partial_r \psi(s,-\eps)\big(1+\eps \kappa(s)\big)
    \\
    -
    \theta^{-1}u_\eps(s,\eps)\big(\partial_r\psi(s,-\eps)-\Upsilon(s)\psi(s,-\eps)\big)(1-\eps \kappa(s))\Big)\,ds
     \\
     +\eps\int_Q u_\eps\big(\kappa^2\partial_n \psi_0^\eps-\kappa\partial_n \psi_1^\eps+U \psi_1^\eps J_\eps\big)\,dn\,ds
     +\eps^2\int_Q \partial_s u_\eps \,\partial_s \psi_\eps J_\eps^{-1} \,dn\,ds.
  \end{multline}
For any sequence $\{w_\eps\}_{\eps>0}$ bounded in $L^2(Q)$, the estimate
\begin{multline*}
  \left|\int_Q u_\eps(s,\eps n) w_\eps(s,n) \,dn\,ds\right|\leq
  \left(\int_Q |u_\eps(s,\eps n)|^2\,dn\,ds\right)^{1/2}\|w_\eps\|_{L^2(Q)}
  \\
  \leq c_1\left(\eps^{-1}\int_{\omega_\eps} |u_\eps(x)|^2\,dx\right)^{1/2}
  \leq c_2\eps^{-1/2}
\end{multline*}
holds, since $\|u_\eps\|_{L^2(\Real^2)}=1$. Also, we have
\begin{equation*}
 \left| \int_Q \partial_s u_\eps \,\partial_s \psi_\eps J_\eps^{-1} \,dn\,ds\right|=\left| \int_Q u_\eps \,\partial_s(J_\eps^{-1}\partial_s\psi_\eps ) \,dn\,ds\right|\leq c_3\eps^{-1/2},
\end{equation*}
because $\kappa \in C^\infty(\gamma)$ and $\psi\in \Psi_\gamma (\Real^2)$ and, therefore, the function $\partial_s(J_\eps^{-1}\partial_s\psi_\eps )$ is bounded on $Q$ uniformly on $\eps$.
Then \eqref{IntOmegaEpsTo} and Lemma~\ref{LemmaDconv} \textit{(iii)} implies
\begin{multline*}
     \int_{\omega_\eps}\big(\nabla u_\eps \nabla \psi_\eps+  V_\eps u_\eps \psi_\eps\big)\,dx
     \\
     \to \int_S \Big(u(s,-0)\partial_r \psi(s,-0)-
    \theta^{-1}u(s,+0)\big(\partial_r\psi(s,-0)
    -\Upsilon(s)\psi(s,-0)\big)\Big)\,ds
     \\
     =\int_\gamma \Big(u^-\partial_r \psi^--
    \theta^{-1}u^+\big(\partial_r\psi^-
    -\Upsilon\psi^-\big)\Big)\,d\gamma=\int_\gamma\Upsilon u^-\psi^-\,d\gamma,
 \end{multline*}
since $\theta^{-1}u^+ = u^-$.
\end{proof}

Now we can finish the proof of Theorem~\ref{MainThrmRes}.
If $\lambda^\eps\to \lambda$ and $u_\eps\to u$ in $L^2(\Real^2)$ weakly as $\cE\ni\eps\to 0$, then for all $\psi\in \Psi_\theta$
\begin{multline*}
  \int_{\Real^2}\big(\nabla u_\eps \nabla \psi_\eps+
              (W+V_\eps-\lambda^\eps)u_\eps \psi_\eps\big)\,dx
              \\=
               \int_{\Real^2\setminus\omega_\eps}\nabla u_\eps \nabla \psi_\eps\,dx+ \int_{\omega_\eps}\big(\nabla u_\eps \nabla \psi_\eps+
   V_\eps u_\eps \psi_\eps\big)\,dx+\int_{\Real^2}(W-\lambda^\eps)u_\eps \psi_\eps\,dx\\
   \to
     \int_{\Omega^+}\nabla u \nabla \psi\,dx+\int_{\Omega^-}\nabla u \nabla \psi\,dx+\int_\gamma\Upsilon u^-\psi^-\,d\gamma+\int_{\Real^2}(W-\lambda)u\psi\,dx,
\end{multline*}
in view of Lemma~\ref{LemmaGradUepsConv}. Hence the identity \eqref{IdentityU} holds for the pair $(\lambda, u)$. If the limit function $u$ is different from zero, then it must be an eigenfunction of the operator $\cH$ associated with the eigenvalue $\lambda$. If $\lambda\not\in \sigma(\cH)$, then $u=0$.
In view of Lemma~\ref{LemmaQuasimodesHeps} and Proposition~\ref{LemQuasimodes}, there exists an eigenvalue $\lambda^\eps$ of $H_\eps$ such that
\begin{equation*}
  |\lambda^\eps-\lambda|\leq c_4\eps
\end{equation*}
for all $\eps$  small enough, not only for $\eps\in\cE$.

\subsection{Proof of  Theorems~\ref{MainThrmNoRes}}
Suppose now that  the operator $-\frac{d^2}{d r^2}+V$ has no zero-energy resonance.
First we note that the assertions of Lemmas~\ref{LemmaQuasimodesHeps} and \ref{LemmaDconv} are independent of whether  $-\frac{d^2}{d r^2}+V$ has a zero-energy resonance or not.

Set $\Phi_\gamma=\{\phi\in \Phi\colon \phi=0 \text{ on }\gamma\}$.
If $\lambda$ is an eigenvalue with  eigenfunction  $u$  of
the direct sum $\mathcal{D}^-\oplus\mathcal{D}^+$, then $u$ belongs to $\Phi_\gamma$ and
\begin{equation}\label{IdentityU0}
   \int_{\Real^2}\big(\nabla u \nabla \phi
   +(W-\lambda)u\phi\big)\,dx=0\qquad \text{for all }\phi\in \Phi_\gamma.
\end{equation}
In this case the proof is much easier, because $\Phi_\gamma$ is a subspace of $\Phi$. Setting $R_\eps=I$ and arguing as in the proof of Theorem~\ref{MainThrmRes}, we can take the limit as $\cE\ni\eps\to 0$ in \eqref{IdentityUeps} for all $\Phi_\gamma$ and obtain identity \eqref{IdentityU0}. It remains to prove that $u=0$ on $\gamma$.

\begin{lem}\label{UpmZero}
  Under the assumptions of Theorem~\ref{MainThrmNoRes}, we have as $\cE\ni\eps\to 0$ that
   \begin{equation*}
   u_\eps(\,\cdot\,,\pm\eps)\to 0\;\;\text{in }L^2(S)\;\; \text{weakly}.
 \end{equation*}
\end{lem}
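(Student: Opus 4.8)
The plan is to show that in the non-resonant case the traces $u_\eps(\cdot,\pm\eps)$ converge weakly to zero in $L^2(S)$ by exploiting the uniform energy bound on $u_\eps$ near $\gamma$ together with the fact that, without a zero-energy resonance, the inner expansion forces the leading profile $v_0$ to vanish. Concretely, I would start from the identity in Lemma~\ref{LemmaDconv}\,(iii) (equation~\eqref{IntUepsDg} and its consequences), which is derived purely from \eqref{SpectralEqn} and hence holds regardless of the resonance hypothesis, giving $u_\eps(\cdot,\eps)\to u^+$ and $u_\eps(\cdot,-\eps)\to u^-$ weakly in $L^2(S)$. So it suffices to prove $u^\pm=0$, i.e. that the weak $W_2^2$-limit $u$ vanishes on $\gamma$ from both sides.

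First I would test \eqref{SpectralEqn} against $a(s)h_1^\eps(r)$ where $h_1^\eps(r)=\eps\,h_1(r/\eps)$, with $h_1$ the solution of the Cauchy problem $-h_1''+Vh_1=0$, $h_1(-1)=0$, $h_1'(-1)=1$ (the function already introduced in \eqref{ProblemH1}), smoothly cut off to have compact support in the $\beta$-collar outside $\omega_\eps$. Integrating by parts twice in the normal variable inside $Q$, using that $h_1$ solves the homogeneous equation and the fact that $-\frac{d^2}{dr^2}+V$ has \emph{no} zero-energy resonance --- which is exactly the statement that the Neumann problem \eqref{NeumanProblem} has only the trivial solution, equivalently that the Wronskian-type quantity $h_1'(1)$ is such that no bounded solution with vanishing derivatives at $\pm1$ exists --- the boundary terms at $n=-1$ and $n=1$ combine into a relation between $u_\eps(\cdot,-\eps)$, $u_\eps(\cdot,\eps)$, and their normal derivatives, with coefficients that do not degenerate as $\eps\to0$. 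The key algebraic point is that non-resonance makes the $2\times2$ boundary matrix (built from $h_1$ and a second independent solution, or from $h_1$ and $h$ if one prefers) invertible with $\eps$-independent inverse, so that the traces of $u_\eps$ are controlled by the traces of $\eps\,\partial_r u_\eps$, which tend to zero.

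The main obstacle, and where I would spend the most care, is bounding $\partial_r u_\eps$ on the slices $\gamma_{\pm\eps}$: a priori one only has $\|u_\eps\|_{W_2^2(\Real^2\setminus\omega_\eps)}\le c$ from the proof of Lemma~\ref{LemmaDconv}\,(ii), which gives $L^2(S)$-trace bounds for $\partial_r u_\eps$ but not smallness. The resolution is to multiply the boundary identity by $\eps$: the troublesome derivative terms appear precisely as $\eps\,\partial_r u_\eps(\cdot,\pm\eps)$, and since $\|\partial_r u_\eps(\cdot,\pm\eps)\|_{L^2(S)}$ is bounded while it is multiplied by $\eps$, these terms vanish in the limit. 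Thus, passing to the weak limit in the identity and using the already-established weak convergences $u_\eps(\cdot,\pm\eps)\to u^\pm$, one gets a homogeneous linear system for $(u^-,u^+)$ whose matrix is invertible by the no-resonance hypothesis; hence $u^-=u^+=0$, i.e. $u_\eps(\cdot,\pm\eps)\to0$ weakly in $L^2(S)$, which is the assertion. Finally I would note that once $u=0$ on $\gamma$ is known, combining \eqref{IdentityU0} with the regularity $u|_{\Omega^\pm}\in W_2^2$ established in Lemma~\ref{LemmaDconv}\,(i) shows $u$ is an eigenfunction of $\mathcal{D}^-\oplus\mathcal{D}^+$ (when $u\ne0$), completing part (i) of Theorem~\ref{MainThrmNoRes}; parts (ii)--(iii) then follow exactly as in the resonant case, via Lemma~\ref{LemmaQuasimodesHeps} and Proposition~\ref{LemQuasimodes}.
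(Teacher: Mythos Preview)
Your approach is workable but takes a more circuitous route than the paper's. You first reduce via Lemma~\ref{LemmaDconv}\,(iii) to proving $u^\pm=0$, then build a $2\times 2$ system from two independent solutions and invoke trace bounds on $\partial_r u_\eps$ across $\gamma_{\pm\eps}$. The paper instead works directly in the weak identity~\eqref{IdentityUeps} with a single test function built from the solution $h_0$ of $-h_0''+Vh_0=0$, $h_0(-1)=1$, $h_0'(-1)=0$ (Neumann data, not the Dirichlet data of your $h_1$). Because $h_0'(-1)=0$, the computation~\eqref{ReasAs} produces only the boundary term $-h_0'(1)\int_S u_\eps(s,\eps)\psi(s,-\eps)\,ds$, with no $\partial_r u_\eps$ traces at all; plugging into~\eqref{NeedForTh2} this term carries a factor $\eps^{-1}$, while the full expression is bounded by~\eqref{IdentityUeps} and Lemma~\ref{LemmaDconv}\,(ii). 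Since non-resonance is exactly $h_0'(1)\neq 0$, one gets $\int_S u_\eps(s,\eps)a\,ds=O(\eps)$ for every smooth $a$. The trace at $-\eps$ is handled by the mirror solution with data at $n=1$. This gives a quantitative $O(\eps)$ rate and avoids both the double integration by parts and the uniform-in-$\eps$ trace estimate for $\partial_r u_\eps$ that your argument needs.

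Two small slips in your write-up: the quantity encoding non-resonance is $h_0'(1)$, not $h_1'(1)$ (your $h_1$, being unbounded near $-\infty$, is never a half-bound state regardless of $V$, so $h_1'(1)$ carries no resonance information); and the parenthetical ``or from $h_1$ and $h$'' is vacuous here, since in the non-resonant case there is no half-bound state $h$ to use. Your $2\times 2$ determinant is indeed $h_0'(1)$, so the invertibility claim is correct once you use $h_0$ as the second solution --- but at that point you have essentially rediscovered the paper's test function, and the contribution from $h_1$ becomes superfluous.
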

\begin{proof}
  Let $h_0$ be the solution of the Cauchy problem
  \begin{equation*}
    -h_0''+Vh_0=0 \;\;\text{in } \cI,\quad  h_0(-1)=1, \;\; h_0'(-1)=0.
  \end{equation*}
Set $\theta_0=h_0(1)$ and $\psi_0^\eps(x)s=\psi(s,-\eps)h_0(\tfrac{r}{\eps})$ for some $\psi\in \Psi_{\theta_0}$. Then the function
\begin{equation*}
  \psi_\eps(x)=
  \begin{cases}
    \psi(x)+(\theta_0\psi(s,-\eps)-\psi(s,\eps))\,\zeta(r-\eps), & \text{if } x\in \Real^2\setminus \omega_\eps,\\
     \psi_0^\eps(x)
     & \text{if } x\in \omega_\eps
  \end{cases}
\end{equation*}
 belongs to $\Phi$. Reasoning as in \eqref{ReasAs} we obtain
\begin{multline*}
  \int_Q (\partial_n u_\eps \partial_n \psi_0^\eps+Vu_\eps \psi_0^\eps)J_\eps\,dn\,ds=-h_0'(1)\int_S u_\eps(s,\eps)\psi(s,-\eps)\,ds\\
  +\eps h_0'(1)\int_S \kappa(s)u_\eps(s,\eps)\psi(s,-\eps)\,ds
  +\eps \int_Q \kappa u_\eps \partial_n\psi_0^\eps\,dn\,ds
\end{multline*}
instead of \eqref{IntInLocal1} in the resonant case.
 From \eqref{NeedForTh2}  we deduce
\begin{equation*}
  \int_{\omega_\eps}\big(\nabla u_\eps \nabla \psi_0^\eps+  V_\eps u_\eps \psi_0^\eps\big)\,dx
     =-\eps^{-1}h_0'(1) \int_S u_\eps(s,\eps)\psi(s,-\eps)\,ds+o(1),
\end{equation*}
as $\eps\to 0$. In the non-resonant case, $h_0'(1)$ is always different from zero. However, from identity \eqref{IdentityUeps} and Lemma~\ref{LemmaDconv} \textit{(ii)} it follows immediately that
\begin{equation*}
  \left|\int_{\omega_\eps}\big(\nabla u_\eps \nabla \psi_0^\eps+  V_\eps u_\eps \psi_0^\eps\big)\,dx\right|\leq C
\end{equation*}
for all $\eps\in\cE$. Hence
\begin{equation*}
  \left|\int_S u_\eps(s,\eps)\psi(s,-\eps)\,ds\right|\leq c\eps,
\end{equation*}
where $c$ does not depend of $\eps$. By the arbitrariness of $\psi$, we have that $u_\eps(\,\cdot\,,\eps)\to 0$ in $L^2(S)$ weakly.
To prove the weak convergence of $u_\eps(\,\cdot\,,-\eps)$ to zero, we can choose $h_0$ as a solution of $-h_0''+Vh_0=0$ in  $\cI$, $h_0(1)=1$,  $h_0'(1)=0$.
\end{proof}

Using Lemma~\ref{UpmZero} and part \textit{(iii)} of Lemma~\ref{LemmaDconv} we get $u^\pm=0$.
The rest of the proof runs as before.

\end{document}